\documentclass[12pt]{amsart}

\usepackage[dvipsnames]{xcolor}
\colorlet{RED}{red}

\usepackage{enumerate}
\usepackage{enumitem}

\usepackage{graphicx}
\usepackage{amssymb,amsmath, amsthm} 
\usepackage{booktabs}
\usepackage{hyperref}
\usepackage{ mathrsfs }
\usepackage{algorithm}
\usepackage{fullpage}
\usepackage{colonequals}

\newtheorem{thm}{Theorem}[section]
\newtheorem{lemma}[thm]{Lemma} 
\newtheorem{prop}[thm]{Proposition}
\newtheorem*{prop*}{Proposition}
\newtheorem{comp}[thm]{Computation}

\theoremstyle{definition}
\newtheorem{defn}[thm]{Definition}
\newtheorem{question}[thm]{Question}

\newtheorem{example}[thm]{Example}

\newtheorem{rmk}[thm]{Remark}

\newcommand{\Z}{\mathbb Z}
\newcommand{\N}{\mathbb N}
\newcommand{\R}{\mathbb R}
\newcommand{\C}{\mathbb C}
\newcommand{\Q}{\mathbb Q}
\newcommand{\F}{\mathbb F}
\newcommand{\A}{\mathbb A}

\newcommand{\PP}{\mathbb P}

\newcommand{\mc}{\mathcal}

\newcommand{\red}{\mathrm{red}}

\newcommand{\ovl}{\overline} 
 
\DeclareMathOperator{\PGL}{PGL}
\DeclareMathOperator{\LS}{LS}
\DeclareMathOperator{\LSF}{LSF}
\DeclareMathOperator{\Spec}{Spec}

\DeclareMathOperator{\Fano}{Fano}
\DeclareMathOperator{\MK}{MK}
\DeclareMathOperator{\Pappus}{Pappus}

\DeclareMathOperator{\Star}{Star}

\newcommand{\GITQ}{/ \! /}
 
\setcounter{MaxMatrixCols}{12}

\usepackage{pgfplots}
\pgfplotsset{compat=1.15}
\usepackage{mathrsfs}
\usetikzlibrary{arrows}

\begin{document}

\title[Configurations of 10 points]{Configurations of 10 points and their incidence varieties}
\author[K. Isham, N. Kaplan, S. Kimport, R. Lawrence, L. Peilen, and M. Weinreich]{Kelly Isham, Nathan Kaplan, Sam Kimport, Rachel Lawrence, Luke Peilen, and Max Weinreich}
\date{\today}

\address{\parbox{\linewidth}{Department of Mathematics, Colgate University, Hamilton, NY 13346.\\
ORCID: 0000-0001-7979-0516\\}
}
\email{kisham@colgate.edu}

\address{\parbox{\linewidth}{Department of Mathematics, University of California, Irvine, CA 92697.\\
ORCID: 0000-0001-5305-983X \\}
}
\email{nckaplan@math.uci.edu}

\address{\parbox{\linewidth}{Corporate Research, Robert Bosch LLC, Sunnyvale, CA 94085\\}}
\email{skimport.math@gmail.com}

\address{\parbox{\linewidth}{Microsoft Research Cambridge, Cambridge, UK CB1 2FB.\\
ORCID: 0009-0002-7958-9657\\}
}
\email{rlaw@berkeley.edu}

\address{\parbox{\linewidth}{Department of Mathematics, Temple University, Philadelphia, PA 19122.\\
ORCID: 0000-0003-1400-0610\\}}
\email{luke.peilen@temple.edu}

\address{\parbox{\linewidth}{Department of Mathematics, Harvard University, Cambridge, MA 02138.\\
 ORCID: 0000-0002-0103-2245\\}
}
\email{mweinreich@math.harvard.edu}

\keywords{incidence varieties, configurations of points and lines, computational algebraic geometry}

\subjclass[2020]{14N20, 14J10, 51A20}

\begin{abstract}
    Incidence varieties are spaces of $n$-tuples of points in the projective plane that satisfy a given set of collinearity conditions. We classify the components of incidence varieties and realization moduli spaces associated to configurations of up to $10$ points, up to birational equivalence. We show that each realization space component is birational to a projective space, a genus $1$ curve, or a K3 surface. To do this, we reduce the problem to a study of $163$ special arrangements called superfigurations. Then we use computer algebra to describe the realization space of each superfiguration.
\end{abstract}
\maketitle

\section{Introduction}

\begin{defn} \label{def_iv_intro}
    Fix a field $k$, and let $\PP^2 = \PP^2_k$ be the projective plane over $k$. Let $n \in \N$, and let $L$ be a collection of subsets of $\{1,2,\ldots, n\}$. The elements of $L$ are called \emph{lines} or \emph{collinearity conditions}. A \emph{weak realization} of $(n, L)$ over $k$ is an $n$-tuple of points $(p_1, \ldots, p_n)$ in $\PP^2$ such that, for each $\ell \in L$, the set of points $\{p_i : i \in \ell\}$ is collinear.
    
    The \emph{incidence variety} $\mc{I}(L) = \mc{I}(n,L)$ over $k$ is the subvariety of $(\PP^2)^n$ consisting of all weak realizations of $(n, L)$.
\end{defn}

Incidence varieties are parameter spaces for point configurations with prescribed lines. For example, the
\emph{Fano plane} (Figure \ref{fig_fano}) is the combinatorial data of the point set $\{1,2,\ldots,7\}$ with the collinearity conditions
$$L_{\Fano} = \{\{1,2,3\}, \{1,4,5\}, \{1, 6, 7\}, \{3,4,7\}, \{ 3,5,6\}, \{2,5,7\}, \{2,4,6\}\}.$$
The incidence variety $\mc{I}(7, L_{\Fano})$ parametrizes $7$-tuples of points in $\PP^2$ that form a (possibly degenerate) Fano plane. 

    \begin{figure}[h]
    \includegraphics[]{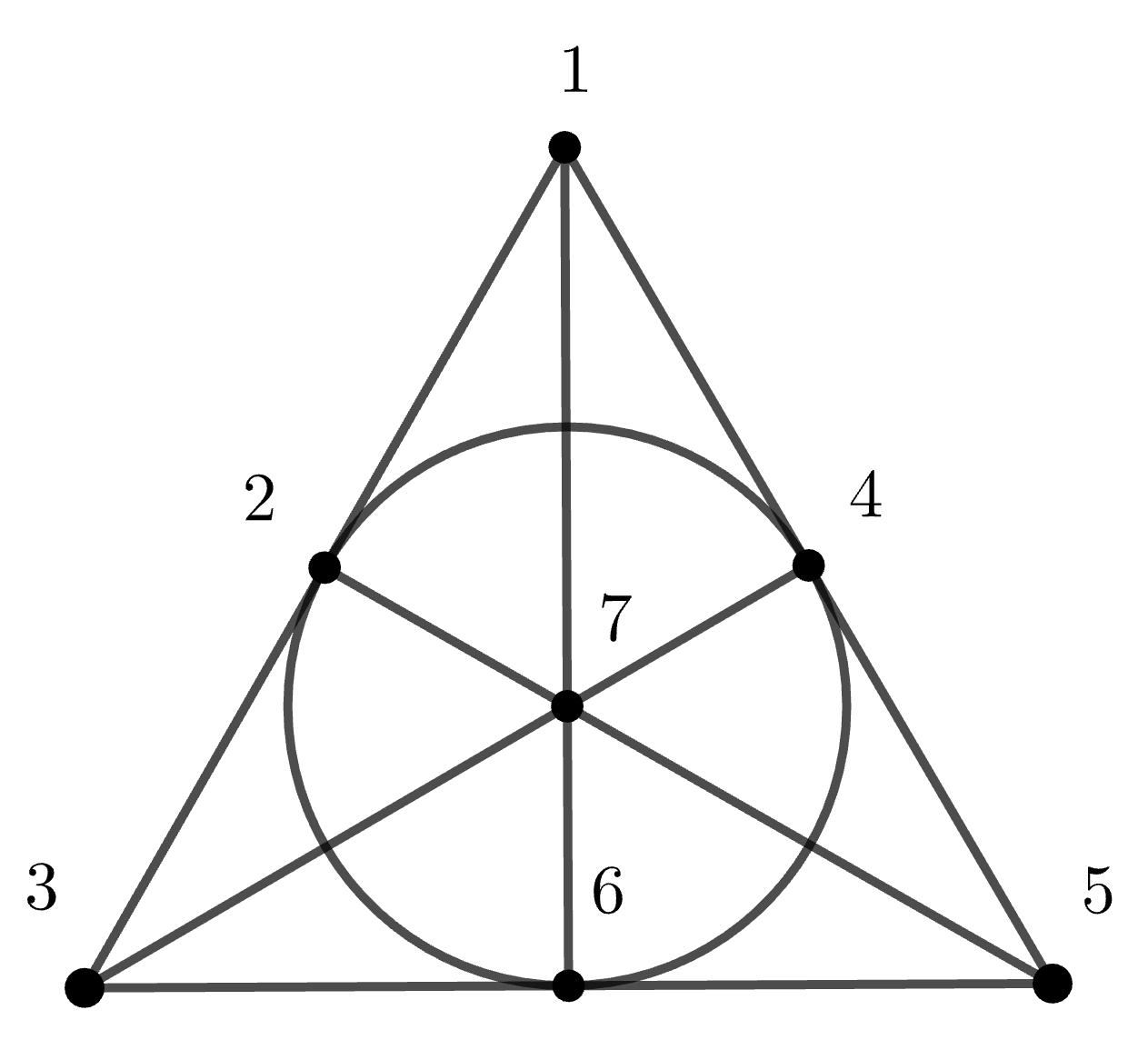}
        \caption{The Fano plane.}
        \label{fig_fano}
    \end{figure}

Incidence varieties are defined by closed conditions, so they contain degenerate configurations as well. For instance, for all $p \in \PP^2$, every incidence variety $\mc{I}(n, L)$ contains $(p,\ldots,p)$. We also observe that, because projective transformations preserve collinearity, the variety $\mc{I}(n, L)$ is invariant under the diagonal action of $\PGL_3$ on $(\PP^2)^n$. The next definitions remove degenerate configurations from $\mc{I}(n, L)$ and remove symmetry.

\begin{defn} \label{def_rs_intro}
    Let $n \in \N$, and let $L$ be a set of collinearity conditions. A \emph{strong realization} of $(n,L)$ over $k$ is an $n$-tuple $(p_1, \ldots, p_n)$ of distinct points in $\PP^2_k$ such that, for each $3$-subset $\ell = \{a,b,c\}$ of $\{1,2,\ldots,n\}$, the points $p_a, p_b, p_c$ are collinear if and only if $\ell$ is a subset of an element of $L$. The set of strong realizations of $L$ defines an open subvariety $\mc{I}^\circ(n,L)$ of $\mc{I}(n,L)$. The \emph{realization space} $\mc{R}(L) = \mc{R}(n,L)$ over $k$ is the moduli space of strong realizations of $L$ up to $\PGL_3$-equivalence.
\end{defn}

Except in a few highly degenerate cases, such as when $n \leq 3$ (Remark \ref{rem_git}), the realization space $\mc{R}(n,L)$ exists as a quasi-projective variety and can be explicitly constructed as follows. By a projective transformation, we prescribe the positions of $4$ points, no $3$ of which are collinear in $L$, and allow the remaining $n-4$ points to vary in $\PP^2$. Then we impose equations and inequations to express $\mc{R}(n,L)$ as a quasi-projective subvariety of $(\PP^2)^{n-4}$, see Definition~\ref{def_rs}. In these non-degenerate cases, the incidence variety $\mc{I}(n,L)$ contains the Zariski closure of $\mc{R}(n,L) \times \PGL_3$, and possibly additional components that parametrize degenerations of $(n,L)$.

All the components of realization spaces $\mc{R}(n,L)$ with $n \leq 9$ are birational to projective spaces \cite{MR1391025}. But Sink recently showed that the realization spaces of some classical $10$-point configurations known as $10_3$-configurations are open subsets of K3 surfaces \cite[Theorem 1.2]{sink}.
In this article, we describe all incidence varieties that can be constructed with $n \leq 10$ points up to birational equivalence. 
In a sense, we show that the K3 surfaces obtained from $10_3$-configurations are the most complicated realization spaces definable with $10$ points.

\begin{thm} \label{thm_main_bir}
    Let $n \leq 10$, and let $\mc{I}(n,L)$ be the incidence variety over $\Q$ determined by a collection $L$ of subsets of $\{1,2,\ldots, n\}$. Then every geometric component of $\mc{I}(n, L)$ is birational over $\bar{\Q}$ to one of the following varieties:
    \begin{enumerate}
        \item a projective space $\PP^N$, where $2 \leq N \leq 2n$,
        \item a genus $1$ curve $\times \PGL_3$,
        \item a K3 surface $\times \PGL_3$.
    \end{enumerate}
\end{thm}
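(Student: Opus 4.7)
The plan is to decompose incidence varieties into pieces indexed by maximal realizable configurations, reduce to a finite combinatorial list of ``superfigurations,'' and then carry out explicit computer algebra on each. First I would show that every geometric component of $\mc{I}(n,L)$ is the Zariski closure of $\mc{I}^\circ(n,L') \cdot \PGL_3$ for some realizable $L' \supseteq L$. This follows because the stratification of $\mc{I}(n,L)$ by exact collinearity pattern is locally closed: the generic point of a component determines a unique maximal $L'$, and the component is birational to $\mc{R}(n, L') \times \PGL_3$ since $\PGL_3$ acts freely on generic strong realizations and a rational section of the torsor is obtained by fixing four points in standard position. Because $\PGL_3$ is rational of dimension $8$, the projective-space case yields the exponent range $2 \leq N \leq 2n$, and it suffices to classify $\mc{R}(n, L')$ up to birational equivalence for all realizable $(n, L')$ with $n \leq 10$.

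Next I would reduce to a finite list. If $(n, L')$ admits an additional line whose insertion preserves realizability, then $\mc{R}(n, L'')$ for the enlarged $L''$ is a proper closed subvariety of $\mc{R}(n, L')$; taking $L'$ maximal with respect to this specialization relation defines the superfigurations, and every other realization space component appears already inside a larger stratum of some superfiguration's realization space. A computer enumeration of line sets on $n \leq 10$ points, modulo the action of $S_n$ and ruling out standard non-realizability obstructions (the Fano plane, non-Pappus arrangements, etc.), should produce the $163$ superfigurations mentioned in the abstract. For $n \leq 9$ I then invoke \cite{MR1391025}, which gives birational equivalence with projective space for every realization space component.

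For each of the superfigurations with $n = 10$, I would pin down four points in general position using the $\PGL_3$-action, parameterize the remaining six points as a subvariety of $(\PP^2)^6$, and impose the collinearity equations. Gr\"obner-basis and elimination routines then yield an explicit model of $\mc{R}(10, L')$ whose birational type can be read off. Most superfigurations should be rational, reproducing $\PP^N$; the classical $10_3$-configurations should recover the K3 surfaces of \cite[Theorem 1.2]{sink}; and a small number of special configurations should yield genus-$1$ curves, corresponding to situations in which one remaining free point must lie on a fixed cubic in $\PP^2$ carved out by the incidences.

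The hardest step will be the combinatorial enumeration: proving that the list of $163$ superfigurations is complete, correctly handling the $S_{10}$-action, and detecting realizability over $\bar\Q$ rather than merely over $\R$ or $\F_2$. The per-case computations are mechanical in principle, but correctly identifying the K3 and genus-$1$ models from the resulting ideals requires careful geometric interpretation---computing canonical bundles, counting singularities, or recognizing the configuration as a known classical one---and for the K3 cases I would rely heavily on Sink's analysis \cite{sink} to certify the birational type rather than attempting it from scratch.
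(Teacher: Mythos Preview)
Your first step---stratifying $\mc{I}(n,L)$ by exact collinearity type and trivializing the free $\PGL_3$-action via a frame of four points to pass to $\mc{R}(n,L')$---is essentially what the paper does, modulo some care with degenerate configurations lacking four points in general position.

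The gap is in your reduction to a finite list. Your proposed notion of superfiguration---$L'$ maximal among realizable line sets under addition of further lines---is not the paper's definition and does not yield a reduction. The paper defines an $n_3$-superfiguration purely combinatorially: a linear space in which every point lies on at least $3$ full lines and every full line contains at least $3$ points. Many of these (e.g.\ Desargues) are not maximal in your sense, and conversely a linear space with a point lying on no full line is already generically maximal yet is not a superfiguration. Worse, your direction of specialization runs the wrong way for the intended conclusion: if $L'' \supsetneq L'$ then $\mc{R}(n,L'')$ lies in the \emph{boundary} of the closure of $\mc{R}(n,L')$, so knowing the birational types of the most-constrained strata tells you nothing about the less-constrained ones. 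The reduction the paper actually uses is Glynn's lemma: if a linear space $S$ has a point $p$ lying on at most two full lines, then forgetting $p$ exhibits $\mc{I}^\circ(S)$ as a $\PP^{2-N}$-bundle ($N\in\{0,1,2\}$) over $\mc{I}^\circ(S')$ for a linear space $S'$ on one fewer point. Iterating removes \emph{points}, not lines, until one reaches a superfiguration in the paper's sense; this is how the $5749$ linear spaces on at most $10$ points collapse to $163$ superfigurations. Your line-addition criterion produces a different set and, crucially, supplies no fibration or other mechanism relating $\mc{R}(n,L')$ birationally to anything on the shorter list. Once the correct reduction is in place, your third step---Gr\"obner-basis computation on each remaining case, together with the cited results for $n\le 9$ and for the ten $10_3$-configurations---is exactly what the paper does.
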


Since the K3 surfaces that appear are described in \cite{sink}, our focus is on types (1) and (2). We provide a gallery of the most interesting examples in Section \ref{sec_census} and provide complete information on the fields of definition of the $\bar{\Q}$-components. Supporting code and data is contained in a repository \cite{site}.

The motivating background for this work is the Mn\"ev-Sturmfels Universality Theorem, which states that every affine variety over $\Q$ is birational to a component of some incidence variety. An early version is due to Mn\"ev \cite{MR802868}, and the version we have stated is due to Sturmfels \cite[Theorem 7]{MR1002208}; see \cite[Section 2.1]{MR1009366} for a proof. This discovery led to a broader search for \emph{universality theorems}. Broadly speaking, these theorems aim to show that all algebro-geometric objects of some kind (i.e., varieties, schemes, singularities) may be constructed within some well-known class of varieties.  A modern version by Lee-Vakil, \emph{Murphy's Law for incidence schemes}, states that for all finite-type $\Z$-scheme singularities $\sigma$, there exists an incidence scheme with a singularity equivalent to $\sigma$ \cite[Theorem 1.3]{MR3114952}. Vakil used this to prove that many other moduli spaces of interest satisfy Murphy's Law \cite{MR2227692}. Further afield, in semialgebraic geometry, there is an analogous universality theorem for configuration spaces of mechanical linkages \cite{MR1923214}.

These universality theorems are constructive, but the constructions used are recursive and rely on a large supply of points. One therefore hopes that when $n$ is small, the incidence varieties that appear are well-behaved. Theorem \ref{thm_main_bir} classifies the incidence varieties that can be constructed using at most $10$ points, and all belong to relatively well-understood classes.

\subsection{Proof outline}
One way to prove these results would be to compute equations for $\mc{I}(n,L)$ for each $1 \leq n \leq 10$ and each of the $2^{2^{n}}$ possible choices of $L$. But there are too many choices for $L$ to work with them all directly, nor do we need to. Using arguments from incidence geometry, we reduce the problem to an explicit study of just $163$ pairs $(n, L)$, called $n_3$-superfigurations.

\begin{defn} \label{def_ls_intro}
    Given $n \in \N$, and a set $L$ consisting of subsets of $\{1,2,\ldots, n\}$, 
    the pair $(n,L)$ is a \textit{linear space} with \emph{point set} $\{1,2,\ldots, n\}$ and \emph{line set} $L$ if the following axioms hold:
\begin{enumerate}
\item any two distinct points belong to exactly one line;
\item each line contains at least two points. 
\end{enumerate}

    An \emph{$n_3$-configuration} is a linear space in which each point is contained in exactly $3$ lines and each line contains exactly $3$ points.
    
    An \emph{$n_3$-superfiguration} is a linear space in which each point is contained in at least $3$ lines, and each line contains at least $3$ points.
\end{defn}

Linear spaces are purely combinatorial objects, defined without reference to an ambient projective plane. The Fano plane is an example, and is easily checked to be a $7_3$-configuration. Linear spaces can also be defined as simple matroids of rank at most $3$. For the matroid point of view, see e.g. \cite{MR2886089}.

Linear spaces account for all possible incidence varieties, in the sense that every component appearing on an incidence variety $(n, L)$ is isomorphic to a component appearing on the incidence variety of a linear space $(n', L')$ for some $n' \leq n$ and some $L'$ (Proposition \ref{prop_ls_reduction}). This justifies the reduction to linear spaces. There are $5749$ linear spaces up to isomorphism with $1 \leq n \leq 10$, a computation first made by D. Betten and D. Glynn; see \cite{MR1670277} and the references therein.

The further reduction to superfigurations follows from a lemma introduced by Glynn in the context of incidence geometry \cite{MR957209}. The idea is that some linear spaces are simple extensions of linear spaces with fewer points. For instance, given a linear space $(n, L)$ consisting of $n$ points with set $L$ of lines, we may define the linear space $(n+1, L)$ consisting of $n + 1$ points and the same lines $L$, placing no conditions on the last point. Then $\mc{I}(n+1, L) \cong \mc{I}(n, L) \times \PP^2$. Similar relationships hold for linear spaces that have a point that belongs to only $1$ or $2$ lines of at least $3$ points; see Lemma \ref{lemma_glynn_ag}. Superfigurations are precisely the linear spaces that do not reduce in this way. Of the $5749$ linear spaces with $n \leq 10$ points, $163$ are $n_3$-superfigurations. 

The most difficult step is then to describe the geometry of the $163$ realization spaces associated to $n_3$-superfigurations. Two cases were previously analyzed in the literature. First, for each of the twelve $n_3$-superfigurations with $n \leq 9$, the realization space has only rational components \cite{MR1391025}. Second, the realization spaces of the ten classical $10_3$-configurations were described by Sink \cite{sink}. Each is birational to $\PP^2$, $\PP^3$, or a K3 surface. The K3 surfaces that appear are quite remarkable: they are defined over $\Q$, are elliptic and modular, have Picard rank $20$, and have discriminant $-7$ (for two different configurations), $-8$, and $-11$.

We classify the remaining $141$ realization spaces in this paper. To handle the large number of cases, we use computer algebra. Each realization space is a Zariski open subset of a subvariety of $\A^{10}$ defined by the simultaneous vanishing of several polynomials. In SageMath \cite{sage}, we compute a Gr\"obner basis over $\mathbb{Q}$ for the defining ideal of each variety viewed as a scheme, then eliminate variables by substitution, producing isomorphic schemes with simpler presentations. This gives a nice model for each realization space over $\Q$. We found that each $\bar{\Q}$-component was rational or a genus $1$ curve.

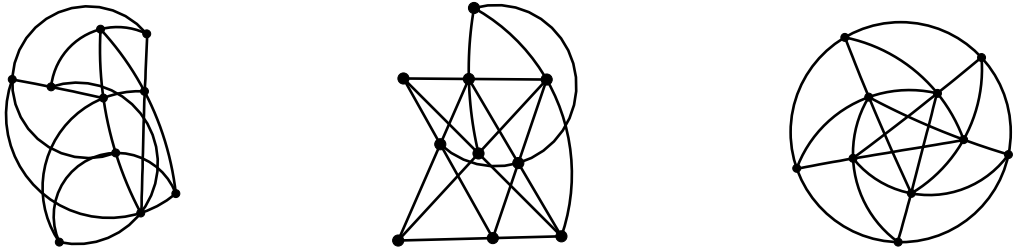
\begin{figure}
    \centering
\begin{tikzpicture}[line cap=round,line join=round,>=triangle 45, scale = 0.5]
\draw [shift={(-2.4320735190440486,4.414795392394823)},line width=1pt]  plot[domain=1.114797733859404:3.0396647886965855,variable=\t]({1*1.7715154520506178*cos(\t r)+0*1.7715154520506178*sin(\t r)},{0*1.7715154520506178*cos(\t r)+1*1.7715154520506178*sin(\t r)});
\draw [shift={(-3.5048955044501597,2.560016739501679)},line width=1pt]  plot[domain=-1.7918011640735898:1.897471955465433,variable=\t]({1*2.148666563490292*cos(\t r)+0*2.148666563490292*sin(\t r)},{0*2.148666563490292*cos(\t r)+1*2.148666563490292*sin(\t r)});
\draw [shift={(6.4004672563962846,5.594924837924639)},line width=1pt]  plot[domain=3.0841248928551255:3.6293941297804393,variable=\t]({1*9.295518328768676*cos(\t r)+0*9.295518328768676*sin(\t r)},{0*9.295518328768676*cos(\t r)+1*9.295518328768676*sin(\t r)});
\draw [shift={(-8.528533475656674,0.897528882283022)},line width=1pt]  plot[domain=0.11156374567573846:0.7470412205566588,variable=\t]({1*7.699071143059578*cos(\t r)+0*7.699071143059578*sin(\t r)},{0*7.699071143059578*cos(\t r)+1*7.699071143059578*sin(\t r)});
\draw [shift={(-3.2090993070868143,4.719285856970105)},line width=1pt]  plot[domain=0.6903284518401642:5.083568208978164,variable=\t]({1*2.019525550797525*cos(\t r)+0*2.019525550797525*sin(\t r)},{0*2.019525550797525*cos(\t r)+1*2.019525550797525*sin(\t r)});
\draw [shift={(286.26226369750015,-5.97801149019916)},line width=1pt]  plot[domain=3.0999955455414554:3.116547769881637,variable=\t]({1*288.16350578675707*cos(\t r)+0*288.16350578675707*sin(\t r)},{0*288.16350578675707*cos(\t r)+1*288.16350578675707*sin(\t r)});
\draw [shift={(-2.6477987759985284,3.8531586617927376)},line width=1pt]  plot[domain=2.791954250806317:5.413211707068524,variable=\t]({1*2.7455718123894095*cos(\t r)+0*2.7455718123894095*sin(\t r)},{0*2.7455718123894095*cos(\t r)+1*2.7455718123894095*sin(\t r)});
\draw [shift={(-2.447098128183799,1.1583066705669414)},line width=1pt]  plot[domain=0.3630720014854452:3.568102891503872,variable=\t]({1*1.6792417399861936*cos(\t r)+0*1.6792417399861936*sin(\t r)},{0*1.6792417399861936*cos(\t r)+1*1.6792417399861936*sin(\t r)});
\draw [shift={(-1.852602633513023,1.9103920230743416)},line width=1pt]  plot[domain=1.5158740285967849:3.7397141353080587,variable=\t]({1*2.5693557401020017*cos(\t r)+0*2.5693557401020017*sin(\t r)},{0*2.5693557401020017*cos(\t r)+1*2.5693557401020017*sin(\t r)});
\draw [shift={(-15.778065418131211,-52.85529013199754)},line width=1pt]  plot[domain=1.3475713473413726:1.3897814379494322,variable=\t]({1*58.60650967040487*cos(\t r)+0*58.60650967040487*sin(\t r)},{0*58.60650967040487*cos(\t r)+1*58.60650967040487*sin(\t r)});
\begin{scriptsize}
\draw [fill=black] (-2.879705869324473,6.128823477297905) circle (3pt);
\draw [fill=black] (-1.6519705837164638,6.005300471653418) circle (3pt);
\draw [fill=black] (-5.22725427499977,4.79367655405185) circle (3pt);
\draw [fill=black] (-4.19439454196237,4.5950496823138876) circle (3pt);
\draw [fill=black] (-2.80400643979664,4.297109374706945) circle (3pt);
\draw [fill=black] (-1.7115586452378522,4.475873559271111) circle (3pt);
\draw [fill=black] (-0.8773257839384141,1.7546854164610328) circle (3pt);
\draw [fill=black] (-1.8108720811068328,1.238255549942332) circle (3pt);
\draw [fill=black] (-3.975904983050613,0.46361075016428055) circle (3pt);
\draw [fill=black] (-2.476587889902069,2.837289450486277) circle (3pt);
\end{scriptsize}
\end{tikzpicture}
\hspace{1in}
        \begin{tikzpicture}[line cap=round,line join=round,>=triangle 45,scale=0.7]
\draw [line width=1pt] (-3.48,1.57)-- (-0.76,1.55);
\draw [line width=1pt] (-3.58,-1.51)-- (-0.48,-1.43);
\draw [line width=1pt] (-3.48,1.57)-- (-1.782745310095254,-1.4636192338089098);
\draw [line width=1pt] (-1.782745310095254,-1.4636192338089098)-- (-0.76,1.55);
\draw [line width=1pt] (-0.76,1.55)-- (-3.58,-1.51);
\draw [line width=1pt] (-3.58,-1.51)-- (-2.238302427420663,1.5608698707898578);
\draw [line width=1pt] (-2.238302427420663,1.5608698707898578)-- (-0.48,-1.43);
\draw [line width=1pt] (-0.48,-1.43)-- (-3.48,1.57);
\draw [shift={(4.669235566240918,1.7357084419589675)},line width=1pt]  plot[domain=2.97081673501844:3.373903505979053,variable=\t]({1*6.9097503326709795*cos(\t r)+0*6.9097503326709795*sin(\t r)},{0*6.9097503326709795*cos(\t r)+1*6.9097503326709795*sin(\t r)});
\draw [shift={(-1.7280578753935256,1.4338310689875375)},line width=1pt]  plot[domain=-2.327793770452036:1.8429346782314393,variable=\t]({1*1.5325700724312*cos(\t r)+0*1.5325700724312*sin(\t r)},{0*1.5325700724312*cos(\t r)+1*1.5325700724312*sin(\t r)});
\draw [shift={(-3.891483545932041,-0.24738771572515816)},line width=1pt]  plot[domain=-0.33369292744940005:1.064332317505902,variable=\t]({1*3.6106497751904447*cos(\t r)+0*3.6106497751904447*sin(\t r)},{0*3.6106497751904447*cos(\t r)+1*3.6106497751904447*sin(\t r)});
\begin{scriptsize}
\draw [fill=black] (-3.48,1.57) circle (3pt);
\draw [fill=black] (-0.76,1.55) circle (3pt);
\draw [fill=black] (-3.58,-1.51) circle (3pt);
\draw [fill=black] (-0.48,-1.43) circle (3pt);
\draw [fill=black] (-1.782745310095254,-1.4636192338089098) circle (3pt);
\draw [fill=black] (-2.238302427420663,1.5608698707898578) circle (3pt);
\draw [fill=black] (-2.780538084472404,0.31980394346094) circle (3pt);
\draw [fill=black] (-2.054897959183674,0.14489795918367332) circle (3pt);
\draw [fill=black] (-1.298712947694629,-0.03737046716276198) circle (3pt);
\draw [fill=black] (-2.14,2.91) circle (3pt);
\end{scriptsize}
\end{tikzpicture}
\hspace{1in}
\begin{tikzpicture}[line cap=round,line join=round,>=triangle 45, scale = 0.5]
\draw [line width=1pt] (-3.444375713792118,5.13216223124338) circle (2.9264689780104063cm);
\draw [shift={(-1.8075708949439155,4.560413465604551)},line width=1pt]  plot[domain=2.5973981670249504:4.085840277487319,variable=\t]({1*2.905911702693075*cos(\t r)+0*2.905911702693075*sin(\t r)},{0*2.905911702693075*cos(\t r)+1*2.905911702693075*sin(\t r)});
\draw [shift={(-244.25166518275202,67.89549490584633)},line width=1pt]  plot[domain=6.0168071715805915:6.033211074520773,variable=\t]({1*249.54137058794134*cos(\t r)+0*249.54137058794134*sin(\t r)},{0*249.54137058794134*cos(\t r)+1*249.54137058794134*sin(\t r)});
\draw [shift={(41.792653648963274,-261.9147123283774)},line width=1pt]  plot[domain=1.7326223453605978:1.7492737168503207,variable=\t]({1*270.3800184326036*cos(\t r)+0*270.3800184326036*sin(\t r)},{0*270.3800184326036*cos(\t r)+1*270.3800184326036*sin(\t r)});
\draw [shift={(-3.213964753601094,3.0590483491736227)},line width=1pt]  plot[domain=1.3345245791466862:2.7862101164005866,variable=\t]({1*3.193888013382345*cos(\t r)+0*3.193888013382345*sin(\t r)},{0*3.193888013382345*cos(\t r)+1*3.193888013382345*sin(\t r)});
\draw [shift={(-5.766138040400408,3.4894457507933554)},line width=1pt]  plot[domain=0.34667921650570505:1.3724929456744344,variable=\t]({1*4.247703253883034*cos(\t r)+0*4.247703253883034*sin(\t r)},{0*4.247703253883034*cos(\t r)+1*4.247703253883034*sin(\t r)});
\draw [shift={(54.30342615010952,30.425370869215268)},line width=1pt]  plot[domain=3.5086141766553016:3.5797304338782165,variable=\t]({1*63.45909885629036*cos(\t r)+0*63.45909885629036*sin(\t r)},{0*63.45909885629036*cos(\t r)+1*63.45909885629036*sin(\t r)});
\draw [shift={(-5.133555074889235,6.810783656474241)},line width=1pt]  plot[domain=-1.0332464834935111:0.07975189085234634,variable=\t]({1*3.8513543779116834*cos(\t r)+0*3.8513543779116834*sin(\t r)},{0*3.8513543779116834*cos(\t r)+1*3.8513543779116834*sin(\t r)});
\draw [shift={(-52.95985117826745,69.42607566602908)},line width=1pt]  plot[domain=5.35101332465303:5.4046756257350665,variable=\t]({1*80.94232056366386*cos(\t r)+0*80.94232056366386*sin(\t r)},{0*80.94232056366386*cos(\t r)+1*80.94232056366386*sin(\t r)});
\draw [shift={(5.175828132275947,23.787371163994344)},line width=1pt]  plot[domain=4.221661204660634:4.42190266871337,variable=\t]({1*20.09374292510796*cos(\t r)+0*20.09374292510796*sin(\t r)},{0*20.09374292510796*cos(\t r)+1*20.09374292510796*sin(\t r)});
\draw [shift={(-2.6826967288733012,6.049623219886496)},line width=1pt]  plot[domain=3.813613699010476:5.6592319276664185,variable=\t]({1*2.5916410965332317*cos(\t r)+0*2.5916410965332317*sin(\t r)},{0*2.5916410965332317*cos(\t r)+1*2.5916410965332317*sin(\t r)});
\begin{scriptsize}
\draw [fill=black] (-4.929313967392828,7.6539035070785015) circle (3pt);
\draw [fill=black] (-1.2944422145881322,7.117610953386004) circle (3pt);
\draw [fill=black] (-0.579385476331471,4.5354616207925) circle (3pt);
\draw [fill=black] (-3.5114595205939096,2.2064622394256523) circle (3pt);
\draw [fill=black] (-6.208277736640267,4.17035856762321) circle (3pt);
\draw [fill=black] (-4.2937079778313505,6.064888533174806) circle (3pt);
\draw [fill=black] (-2.466340757842105,6.164201969043787) circle (3pt);
\draw [fill=black] (-1.77114670675924,4.932715364268423) circle (3pt);
\draw [fill=black] (-3.16153480892497,3.5026018877550977) circle (3pt);
\draw [fill=black] (-4.71082440848107,4.4361481849235185) circle (3pt);
\end{scriptsize}
\end{tikzpicture}
    \caption{A sampling of $10_3$-superfigurations.}
    \label{fig_sf}
\end{figure}

\subsection{Road map}
In Section \ref{sec_algorithms}, we reduce the study of incidence varieties to superfigurations. In Section \ref{sec_census}, we present a census of the varieties associated to the superfigurations on 10 points, proving Theorem \ref{thm_main_bir}. In Section \ref{sec_q}, we pose questions for future research. In an appendix, for lack of a reference, we prove a well-known result that reduces the study of general incidence varieties to the study of linear spaces.

\subsection{Acknowledgements}

The authors wish to thank Ronno Das, Noam Elkies, Sudhir Ghorpade, Brendan Hassett, Robert Lemke Oliver, Joe Silverman, and Elias Sink for helpful discussions. In particular, we thank Noam Elkies for sharing his notes on spaces of $10_3$-configurations with us at an earlier stage of this project. 

Some of the authors obtained partial results in this direction as part of an earlier project on point-counting over finite fields \cite{MR3721583}, which we informally published in the form of an online database \cite{prev_website}. 

Parts of this work appeared in the Ph.D. thesis of the first author \cite[Chapter 2]{isham_thesis}.

Max Weinreich was supported by NSF Grant 2202752.  Nathan Kaplan was supported by NSF grant DMS 2154223.

\section{Reduction to superfigurations} \label{sec_algorithms}

In this section, we show that that every irreducible component of an incidence variety is birational to a variety of the form $\PP^N$ or $I \times \PP^N$, where $I$ is a component of an incidence variety of a superfiguration and $N \geq 0$. (We take the convention that $\PP^0$ is a point.) The ideas in the proof are based on work of Glynn \cite{MR957209}; see also \cite{MR1391025, MR3721583}. Our arguments in this section apply to incidence varieties on any number of points. We take up the case of $n \leq 10$ in Section \ref{sec_census}.

We are interested in studying how a given combinatorial arrangement of points can be realized in a projective plane. Thus, it useful to work with \emph{linear spaces}, which describe arrangements of points and lines without reference to any particular ambient projective plane. This separates the combinatorial aspects of the problem from the algebraic aspects. 
\begin{defn}[\cite{MR1670277}] \label{def_linspace}
A \textit{linear space} $S = (P,L)$ is a pair of sets, a nonempty set $P$ of \emph{points} and a set $L$ of \emph{lines}, such that:
\begin{enumerate}
\item each line is a subset of $P$,
\item any two distinct points belong to exactly one line, 
\item each line contains at least two points. 
\end{enumerate}
A \emph{full line} $\ell$ of a linear space is a line such that $\#\ell \geq 3$. Since a linear space can be recovered from just its point set and its set of full lines, we generally abuse notation by omitting $2$-point lines when we specify line sets.

If $p \in \ell \in L$, we say that $p$ \emph{lies on} $\ell$.

Let $[n] = \{1,2,\ldots, n\}$. If $P = [n]$, this definition of linear space extends Definition \ref{def_linspace} and we write $([n], L) = (n, L)$ for short. The set of linear spaces with point set $[n]$ is denoted $\LS_n$.

A linear space $S$ has an associated set of collinear subsets, defined as follows. A subset $I \subseteq P$ with $\#I \leq 1$ is called \emph{trivially collinear}. A subset $I \subset P$ is \emph{collinear} if it is trivially collinear or $I \subseteq \ell$ for some $\ell \in L$. Given two linear spaces $(P,L)$ and $(P,L')$ with the same set of points, we say $L' \geq L$ if every collinear subset of $P$ in $(P,L)$ is also a collinear subset of $P$ in $(P, L')$.  

Given a pair of linear spaces $S = (P,L)$ and $S' = (P', L')$, a \emph{map of linear spaces} $\sigma: S \to S'$ is a map of sets $\sigma: P \to P'$ such that, for each collinear subset $Q \subseteq P$ in $S$, the image $\sigma(Q)$ is collinear in $S'$. A \emph{(strong) realization} of $S$ in $S'$ is an injective map of linear spaces $\sigma: S \to S'$ with the additional property that, for each non-collinear subset $Q \subseteq P$, the set $\sigma(Q)$ is non-collinear in $S'$. 

\end{defn}

\begin{rmk} There is variation in the literature regarding Definition \ref{def_linspace}. For instance, in \cite{MR1253067}, linear spaces are required to have a non-collinear subset. In \cite{MR3721583}, weak realizations are required to be injective. To avoid confusion, we clarify some consequences of Definition \ref{def_linspace}.
\begin{enumerate}
\item Up to isomorphism, the linear spaces with at most $4$ points are as follows:
\begin{align*}
    &S_{1,1} = (1,\varnothing),\\
    &S_{2,1} = (2,\{\{1,2\}\}), \\
    &S_{3,1} = (3,\{\{1,2\},\{1,3\},\{2,3\}\}),\\
    &S_{3,2} = (3,\{\{1,2,3\}\}),\\
    &S_{4,1} = (4,\{\{1,2\},\{1,3\},\{1,4\},\{2,3\},\{2,4\},\{3,4\}\}), \\
    &S_{4,2} = (4,\{\{1,2,3\},\{1,4\},\{2,4\},\{3,4\}\}), \\
    &S_{4,3} = (4,\{\{1,2,3,4\}\}).\\
\end{align*}
The linear space $S_{1,1}$ is an exceptional case, being the only linear space with empty line set.
\item For each $n$, the relation $\geq$ is a partial order on $\LS_n$ that captures the notion of degeneration. For example, we have
$$S_{4,1} \leq S_{4,2} \leq S_{4,3}.$$
We do not consider comparisons between linear spaces with different point sets.
\item Linear spaces, together with their maps, form a category. Strong realizations are the morphisms of a subcategory. Strong realizations provide us with a notion of ``embedding'' of linear spaces. For example, there are strong realizations of $S_{3,2}$ in $S_{4,2}$ and $S_{4,3}$, but not in $S_{4,1}$; indeed, if $S_{3,2}$ has a strong realization in some linear space $S'$, then $S'$ must have a full line.
\end{enumerate}
\end{rmk}

For the rest of the section, we fix a choice of base field $k$. Our main theorem will take $k = \Q$, but the supporting lemmas apply more generally.

\begin{defn}
The projective plane $\PP^2_k$ over $k$ defines a linear space $(P_k,L_k)$. In algebro-geometric terms, $P_k$ is the set of $k$-points of $\PP^2_k$, and $L_k$ is the set of all vanishing sets of nonzero homogeneous linear polynomials over $k$.

Let $(n,L)$ be a linear space. A \emph{strong realization} of $(n,L)$ over $k$ is a strong realization of $(n,L)$ in $\PP^2_k$. A \emph{weak realization} of $(n,L)$ over $k$ is a linear space map $(n,L) \to \PP^2_k$. The \emph{incidence variety} $\mc{I}(n,L)$ over $k$ is the subvariety of $(\PP^2_k)^n$ parameterizing weak realizations of $(n, L)$ over $k$. The \emph{strong incidence variety} $\mc{I}^\circ(n,L)$ over $k$ is the subvariety of $(\PP^2_k)^n$ parameterizing strong realizations of $(n, L)$ over $k$.
\end{defn}

We note that some linear spaces have no strong realizations over any field $k$, e.g., Pappus and Desargues configurations with a deleted line.

In Definition \ref{def_iv_intro}, we defined incidence varieties associated to arbitrary sets of collinearity conditions. In fact, it suffices to consider incidence varieties associated to linear spaces, as we now explain.

Incidence varieties are algebraic varieties, since they are cut out of $(\PP^2)^n$ by multi-homogeneous equations. Placing homogeneous coordinates $[Z_{i,0} : Z_{i,1} : Z_{i,2}]$ on the $i$-th factor, the ideal of $\mc{I}(n,L)$ is generated by the determinants
\[
\det[abc] \colonequals \begin{vmatrix}
Z_{a,0} & Z_{b,0} & Z_{c,0} \\
Z_{a,1} & Z_{b,1} & Z_{c,1} \\
Z_{a,2} & Z_{b,2} & Z_{c,2} \\
\end{vmatrix},
\]
ranging over $3$-subsets $\{a,b,c\}$ of elements of $L$.

The following proposition justifies working solely with linear spaces rather than arbitrary sets of collinearity conditions. The statement is well-known, but for lack of a reference, we give a proof in the Appendix.

\begin{prop} \label{prop_ls_reduction}
    Let $L$ be a set of subsets of $[n]$. Every component of the incidence variety $\mc{I}(n, L)$ is isomorphic to a component of an incidence variety over $k$ of a linear space on at most $n$ points.
\end{prop}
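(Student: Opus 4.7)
The plan is to induct on $n+|L|$, reducing any pair $(n,L)$ to a linear space on at most $n$ points while preserving a chosen component up to isomorphism. Two trivial simplifications may be applied at any stage without changing $\mc{I}(n,L)$: removing elements $\ell \in L$ with $|\ell| \leq 2$ (which impose no collinearity condition), and removing duplicates from $L$. When $(n,L)$ is itself a linear space, there is nothing to prove.

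For the inductive step, assume every $\ell \in L$ has $|\ell| \geq 3$ and that the linear-space axioms fail. Then there exist distinct $\ell_1, \ell_2 \in L$ with $|\ell_1 \cap \ell_2| \geq 2$; pick $a \neq b$ in this intersection, and fix a component $C$ of $\mc{I}(n,L)$. I split the argument according to whether $C$ lies in the closed locus $\{p_a = p_b\}$. In the closed case, the projection forgetting the $b$-th coordinate identifies $\{p_a = p_b\} \subset (\PP^2)^n$ with $(\PP^2)^{n-1}$, and carries $\mc{I}(n,L) \cap \{p_a = p_b\}$ onto $\mc{I}(n-1, L')$, where $L' = \{\pi(\ell) : \ell \in L\}$ for the quotient map $\pi\colon [n] \to [n]\setminus\{b\}$ sending $b$ to $a$. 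The image $C'$ of $C$ is irreducible and closed; any component of $\mc{I}(n-1,L')$ containing $C'$ lifts to an irreducible closed subset of $\mc{I}(n,L)$ containing $C$, which must equal $C$ by maximality, so $C'$ is itself a component. We then induct with $n$ reduced by one.

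In the open case $C \not\subseteq \{p_a = p_b\}$, let $U = \{p_a \neq p_b\}$, so $C \cap U$ is dense in $C$. On $U$, the collinearity conditions for $\ell_1$ and $\ell_2$ together force all of $\ell_1 \cup \ell_2$ onto the unique line through $p_a$ and $p_b$; hence $\mc{I}(n,L) \cap U = \mc{I}(n,L'') \cap U$, where $L'' = (L \setminus \{\ell_1, \ell_2\}) \cup \{\ell_1 \cup \ell_2\}$. Taking closures in $(\PP^2)^n$ gives $C \subseteq \mc{I}(n,L'')$, and since $\mc{I}(n,L'') \subseteq \mc{I}(n,L)$, the maximality of $C$ in the latter implies that $C$ is a component of the former; now $|L''| < |L|$, so induction applies. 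The main obstacle is bookkeeping: at each reduction step one must verify that the chosen component $C$ remains a component --- not merely a subvariety --- of the smaller incidence variety, and this is exactly what the maximality arguments above guarantee.
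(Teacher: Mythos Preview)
Your argument is correct and takes a genuinely different route from the paper's. The paper develops auxiliary machinery: it introduces a closure operation $L \mapsto \bar{L}$ on sets of collinearity conditions, built from three ``partial closure'' operations, and proves a universal property (Proposition~\ref{prop_closure}) characterizing $(n,\bar{L})$ as the initial linear space through which injective maps factor. It then inducts on $n$ alone, showing in one stroke that $\mc{I}(n,L)\smallsetminus\Delta=\mc{I}(n,\bar{L})\smallsetminus\Delta$ for the \emph{big} diagonal $\Delta=\bigcup_{i\neq j}\Delta_{ij}$, so components off $\Delta$ already lie on the linear-space incidence variety, while components inside $\Delta$ drop to $n-1$ points. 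Your proof avoids the closure machinery entirely by inducting on $n+|L|$ and merging one overlapping pair $\ell_1,\ell_2$ at a time, splitting on a \emph{single} diagonal $\Delta_{ab}$ chosen from the overlap. The trade-off: the paper's approach produces the target linear space $\bar{L}$ explicitly and establishes a universal property that may be reusable; your approach is more elementary and self-contained, needing no lemmas beyond the observation that $\mc{I}(n,L'')\subseteq\mc{I}(n,L)$ and the maximality of components. Both hinge on the same geometric fact---that on the locus $p_a\neq p_b$, overlapping lines through $a,b$ are forced to coincide---but you iterate it locally while the paper packages it globally.
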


\begin{example} \label{ex_ls_reduction}
    We verify Proposition \ref{prop_ls_reduction} for the simplest interesting example. Let $n=4$ and $L_0=\{\{1,2,3\},\{2,3,4\}\}$, so
    $$\mc{I}(4,L_0) = \{(p_1,p_2,p_3,p_4):\det[123]=\det[234]=0\}.$$
    Observe that $(4,L_0)$ is not itself a linear space.
    
    The $4$-tuples in $\mc{I}(4,L_0)$ come in two families. The first family consists of sets of $4$ collinear points, while the second family consists of configurations in which $p_2=p_3$. (In the latter, there are no conditions on $p_1$ or $p_4$.) Both families are irreducible $6$-dimensional varieties, so there are $2$ irreducible components. Each appears on the incidence variety of a natural linear space associated to $L$, as follows.
    
    Let $L_1 =\{\{1,2,3,4\}\}$. Then $(4,L_1)$ is a linear space and
    $$ \mc{I}(4,L_1) = \{(p_1,p_2,p_3,p_4):\det[123]=\det[124]=\det[134]=\det[234]=0\}.$$
    
    Let $\Delta_{23}=\{(p_1,p_2,p_3,p_4):p_2=p_3\}$.
    Examining the equations, we see that $\mc{I}(4,L_1) \subseteq \mc{I}(4,L_0)$. The reverse containment is false, since $\Delta_{23} \subseteq \mc{I}(4,L_0)$ and $\Delta_{23} \not\subseteq \mc{I}(4,L_1)$. Nevertheless, we have
    $$\mc{I}(4,L_0) \smallsetminus \Delta_{23} = \mc{I}(4,L_1) \smallsetminus \Delta_{23}.$$
    Indeed, for each configuration in $\mc{I}(4,L_0) \smallsetminus \Delta_{23}$, since $p_2\neq p_3$, the line $\ovl{p_2 p_3}$ is defined, and since $\det[123] = \det[234]=0$, we have $p_1,p_4 \in \ovl{p_2 p_3}$. Taking Zariski closures in $(\PP^2)^4$,
    \begin{align*}
        \{ & \textrm{components of $\mc{I}(4,L_0)$ not contained in $\Delta_{23}$}\} \\ & = \{ \textrm{components of $\mc{I}(4,L_1)$ not contained in $\Delta_{23}$}\}.
    \end{align*}

    Then it remains to classify the irreducible components of $\mc{I}(4,L_0)$ contained in $\Delta_{23}$. But $\Delta_{23}$ is itself an irreducible subset of $\mc{I}(4,L_0)$ that is contained in no other component. We note that $\Delta_{23}$ is abstractly isomorphic to the linear space $(3,\{\{1,2\},\{1,3\},\{2,3\}\})$.
\end{example}

We now turn to superfigurations, the special linear spaces appearing in the birational classification of incidence variety components. The following definition agrees with Definition \ref{def_ls_intro}.

\begin{defn}
Let $n \geq 1$ and $k \geq 3$. An \emph{$n_k$-configuration} is a linear space on $n$ points such that:
\begin{enumerate}
    \item each full line contains exactly $k$ points, and
    \item each point is contained in exactly $k$ full lines.
\end{enumerate}  
An \emph{$n_k$-superfiguration} is a linear space on $n$ points such that:
\begin{enumerate}
    \item each full line contains \emph{at least} $k$ points, and
    \item each point is contained in \emph{at least} $k$ full lines.
\end{enumerate} 
(We do not require that the number of full lines is $n$.) 
\end{defn}

Note that these notions are purely combinatorial, without reference to any ambient space. We only consider $n_3$-configurations and $n_3$-superfigurations in this article, and we henceforth refer to these as simply as configurations and superfigurations. Configurations are a special case of superfigurations. There are no $n_k$-configurations with $k \ge 4$ on fewer than $13$ points.

Our next step is to reduce the study of incidence varieties to those that arise from superfigurations, following the ideas of Glynn \cite{MR957209}. Given distinct $i, j$ with $1 \leq i \leq n$ and $1 \leq j \leq n$, define
$$\Delta_{ij} = \{(p_1, \dots, p_n) : p_i = p_j\} \subset (\PP^2)^n.$$
The \emph{big diagonal} is the variety
$$\Delta = \bigcup_{i \neq j} \Delta_{ij}.$$

The following lemma is a birational version of Glynn's lemma \cite[Lemma 3.14]{MR957209}.

\begin{lemma} \label{lemma_glynn_ag}
Suppose that $n > 1$ and the linear space $S = (n, L)$ is not a superfiguration. Then:
\begin{enumerate}
    \item There exists a linear space $S'= (n-1, L')$ such that, for every component $\mc{C}_0$ of the strong incidence variety $\mc{I}^\circ(S)$, there exists a component
$\mc{C}'_0$ of $\mc{I}^\circ (S')$ such that $\mc{C}_0$ is birational to $\mc{C}'_0 \times \PP^0$, $\mc{C}'_0 \times \PP^1$, or $\mc{C}'_0 \times \PP^2$. 
    \item For every component $\mc{C}$ of the incidence variety $\mc{I}(S)$, there exist $m < n$, a linear space $S' = (m, L')$ and a component $\mc{C}'$ of $\mc{I}(S')$ such that $\mc{C}$ is birational to $\mc{C}' \times \PP^0$, $\mc{C}' \times \PP^1$, or $\mc{C}' \times \PP^2$. 
\end{enumerate}
\end{lemma}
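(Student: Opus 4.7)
The key combinatorial observation is that since full lines have at least $3$ points by definition, the failure of $S$ to be a superfiguration must occur at the point level: some point of $S$ lies on at most two full lines. Relabel so that this point is $n$, and let $\ell_1, \ldots, \ell_k$ with $k \in \{0, 1, 2\}$ enumerate the full lines through $n$. Form $S' = (n-1, L')$ by removing $n$ from each $\ell_i$ and discarding $\ell_i \setminus \{n\}$ when it becomes a two-point line; using that distinct lines of a linear space meet in at most one point, one checks that $S'$ is again a linear space on $[n-1]$. The plan is to analyze the forgetful projection $\pi \colon (\PP^2)^n \to (\PP^2)^{n-1}$ that drops the $n$-th factor.

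For part (1), $\pi$ sends $\mc{I}^\circ(S)$ dominantly into $\mc{I}^\circ(S')$, and the generic fiber has a direct description in each case. When $k = 0$, the point $p_n$ is unconstrained and the fiber is birational to $\PP^2$. When $k = 1$, the points of $\ell_1 \setminus \{n\}$ are distinct and collinear in a strong realization, so they span a unique line $L \subset \PP^2$; the fiber for $p_n$ is $L$ minus finitely many points, birational to $\PP^1$. When $k = 2$, the two lines spanned by $\ell_1 \setminus \{n\}$ and $\ell_2 \setminus \{n\}$ are distinct (since $\ell_1 \cap \ell_2 = \{n\}$ in $S$), so $p_n$ is forced to their intersection, a $\PP^0$-fiber. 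Thus each component $\mc{C}_0$ of $\mc{I}^\circ(S)$ maps dominantly to a component $\mc{C}'_0$ of $\mc{I}^\circ(S')$ with generic fiber $\PP^k$, and standard properties of birationally trivial projective fibrations give $\mc{C}_0 \sim_{\mathrm{bir}} \mc{C}'_0 \times \PP^k$.

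For part (2), given a component $\mc{C}$ of $\mc{I}(S)$, I would split into two cases. If $\mc{C}$ is not contained in any diagonal $\Delta_{ij}$, the fiber analysis from part (1) applies at the generic point of $\mc{C}$ (its points behave like strong realizations at the relevant collinearities), yielding $m = n-1$, a component $\mc{C}'$ of $\mc{I}(S')$, and the required birational description. If $\mc{C} \subset \Delta_{ij}$ for some $i \neq j$, identify $p_i$ with $p_j$ to obtain an isomorphism $\Delta_{ij} \cong (\PP^2)^{n-1}$ under which $\mc{C}$ becomes a closed subvariety of the incidence variety $\mc{I}(n-1, L_{ij})$, where $L_{ij}$ is obtained from $L$ by identifying $i$ and $j$; then Proposition \ref{prop_ls_reduction} exhibits $\mc{C}$ as a component of an incidence variety of a linear space on some $m \leq n-1$ points, giving the required $\PP^0$ description.

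The main obstacle is the fiber jumps in the weak realization setting. In the $k = 1$ case, the fiber over a base point jumps from $\PP^1$ to $\PP^2$ when the points of $\ell_1 \setminus \{n\}$ all collide; similarly, in the $k = 2$ case, the fiber jumps if the two spanning lines coincide. I expect to handle these by arguing via upper semi-continuity of fiber dimension and a dimension count that any component of $\mc{I}(S)$ whose image lies entirely in such a collision locus must itself be contained in some $\Delta_{ij}$, and is therefore captured by Case B of the weak realization argument. Verifying this dichotomy carefully for each configuration of coincidences is the most delicate part of the proof.
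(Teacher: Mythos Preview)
Your approach to part~(1) is the paper's: relabel so that $n$ lies on $k \leq 2$ full lines, delete $n$ to form $S'$, and read off the fiber of the forgetful map $\pi$. (One slip: you write ``generic fiber $\PP^k$'' at the end of that paragraph, but your own case analysis gives $\PP^{2-k}$.)

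The gap is in part~(2). Your plan is to show that any component of $\mc{I}(S)$ over which the fiber dimension jumps must lie in some $\Delta_{ij}$. This holds when $k=1$, since the spanned line degenerates only if two points of $\ell_1 \smallsetminus \{n\}$ collide. But it is false when $k=2$: the lines spanned by $\ell_1 \smallsetminus \{n\}$ and $\ell_2 \smallsetminus \{n\}$ can coincide with all labeled points distinct, simply by the points of $(\ell_1 \cup \ell_2) \smallsetminus \{n\}$ becoming collinear. Concretely, take $S$ on $6$ points with full lines $\{1,2,6\}, \{3,4,6\}, \{1,3,5\}, \{2,4,5\}$ and $n=6$. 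The locus in $\mc{I}(S)$ where all six points lie on a common line is an $8$-dimensional irreducible component not contained in any $\Delta_{ij}$; over it the fiber of $\pi$ is $\PP^1$ rather than $\PP^0$, and its image under $\pi$ is the $7$-dimensional all-collinear locus in $(\PP^2)^5$, which is \emph{not} a component of $\mc{I}(S')$. So neither branch of your dichotomy applies.

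The paper's remedy is not to push such a component into $\Delta$ but to change the linear space. If the generic point of $\mc{C}$ is not a strong realization of $S$, then it is a strong realization of some strictly more degenerate $T > S$ on the same $n$ points, and $\mc{C}$ is already a component of $\mc{I}(T)$; one then applies part~(1) with $T$ in place of $S$. In the example, $T = (6, \{[6]\})$, point $6$ lies on a single full line of $T$, and part~(1) for $T$ yields the correct $\PP^1$ factor over a genuine component of $\mc{I}(5, \{[5]\})$.
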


\begin{proof}
First, we prove Claim (1). Since $S$ is not a superfiguration, without loss of generality, we may assume that point $n$ is contained in at most $2$ full lines. Let $S' = (n-1,L')$, where
$$L' = \{ \ell \cap [n-1] : \ell \in L, \; \#(\ell \cap[n-1]) \geq 2 \}.$$
It is easy to check that $S'$ is a linear space.

Let $N$ be the number of lines $\ell' \in L'$ such that $\ell' \cup \{n\}$ is a collinear subset of $S$. Since the number of full lines of $S$ containing $n$ is at most $2$ by hypothesis, we have $0 \leq N \leq 2$. Let $\pi: (\PP^2)^n \to (\PP^2)^{n-1}$ be the map that forgets the last factor. Given a component $\mc{C}'$ of $\mc{I}^\circ(\mc{S}')$, let 
$$\Phi_{\mc{C}'} = \pi^{-1}(\mc{C}') \cap \mc{I}(S).$$
The $\pi$-fiber over a strong realization of $S'$ is parameterized by the choice of image of $n$, which is constrained by $N$ independent linear conditions on $\PP^2$, so $\pi$ gives $\Phi_{\mc{C}'}$ the structure of a $\PP^{2-N}$-bundle over $\mc{C}'$. So $\Phi_{\mc{C}'}$ is birational to $\mc{C}' \times \PP^{2-N}$. 
Now let
$$\Phi^\circ_{\mc{C}'} = \pi^{-1}(\mc{C}') \cap \mc{I}^\circ(S).$$
Every strong realization of $S$ is mapped by $\pi$ to a strong realization of $S'$, so
$$\mc{I}^\circ(S) = \bigcup_{\mc{C}' \subseteq \mc{I}^\circ(S')} \Phi^\circ_{\mc{C}'}.$$
So if $\mc{C}_0$ is a component of $\mc{I}^\circ(S)$, there exists a unique component $\mc{C}'$ of $\mc{I}^\circ(S')$ such that $\Phi^\circ_{\mc{C}'}$ is Zariski dense in $\mc{C}_0$. This proves (1).

We now show that (1) implies (2). Suppose we are given $S = (n,L)$ and a component $\mc{C}$ of $\mc{I}(S)$. Without loss of generality, we may assume that $\mc{C}$ is not contained in the big diagonal $\Delta$, since if it were, then $\mc{C}$ would be isomorphic to a component of an incidence variety of a linear space on strictly fewer points. Similarly, we may assume without loss of generality that generically, elements of $\mc{C}$ are strong realizations of $S$, since otherwise we could replace $S$ by a more degenerate linear space on $n$ points. It follows that $\mc{C} \cap \mc{I}^\circ(S)$ is a component $\mc{C}_0$ of $\mc{I}^\circ(S)$, to which we may apply (1). Then if $S', \mc{C}'_0$ are the linear space and component in the result of (1), we may take $S'$ and the component $\mc{C}'$ of $\mc{I}(S')$ containing $\mc{C}'_0$ in (2).
\end{proof}

\begin{example}
    Let $S$ be the linear space with point set $[5]$ and precisely two full lines $\{ 1,2,5\}, \{3,4,5\}$. Then strong realizations of $S$ are in one-to-one correspondence with strong realizations of the linear space $S' = S_{4,1}$ with point set $[4]$ and no full lines. Indeed any strong realization of $S$ projects to a strong realization of $S'$ by forgetting point $5$, and any strong realization of $S'$ extends to a strong realization of $S$ by setting $p_5 = \ovl{p_1 p_2} \cap \ovl{p_3 p_4}$. Note that weak realizations of $S$ cannot be parametrized in this way, since a weak realization of $S'$ may have more extensions if some of the points $p_1,p_2,p_3,p_4$ coincide or if at least three of these points are collinear. In the language of the proof of Lemma \ref{lemma_glynn_ag}, we have $N = 2$ and each component of $\mc{I}^\circ(S)$ is birational to a $\PP^0$-bundle over a component of $\mc{I}^\circ(S')$. With a little more work, one can show that $\mc{I}^\circ(S)\cong \mc{I}^\circ(S')\cong \PGL_3$.
    
In the same example, if we forget point $1$ instead of point $5$, we see that each component of $\mc{I}^\circ(S)$ is birational to a $\PP^1$-bundle over a component of $\mc{I}^\circ(S'')$ where $S'' \cong S_{4,2}$.

\end{example}

\subsection{Frames and projective transformations} \label{sect_frames}

We now explain the role of the group $\PGL_3$ of projective transformations in simplifying the study of incidence varieties. Applying a projective transformation to a strong realization or weak realization produces another strong realization or weak realization. In Definition \ref{def_rs_intro}, we defined realization spaces $\mc{R}(n,L)$ associated to sets of collinearity conditions as moduli spaces of strong realizations up to $\PGL_3$-equivalence. The following recipe expresses $\mc{R}(n,L)$ as a quasi-projective variety. The construction is not canonical, as it depends on a nice choice of $4$ points in $[n]$. However, different choices produce isomorphic varieties, so there is a well-defined realization space up to isomorphism.

\begin{defn} \label{def_frame}
    Let $S$ be a linear space. A \emph{combinatorial frame} is an ordered choice of $4$ distinct points of $S$ such that no line of $S$ contains at least three of the chosen points. A \emph{framed linear space} is a linear space with a choice of combinatorial frame.

Viewing $\PP^2_k$ as a linear space, the \emph{standard frame} is the combinatorial frame $(x_1, x_2, x_3, x_4)$, where
\begin{align*}
    x_1 &= [0:0:1],\\
    x_2 &= [0:1:0],\\
    x_3 &= [1:1:1],\\
    x_4 &= [1:0:0].
\end{align*}
\end{defn}

\begin{defn} \label{def_rs}
Let $S = (P, L)$ be a linear space on $n \geq 4$ points that admits a combinatorial frame $\wp = (p_1,p_2,p_3,p_4)$. A \emph{framed map} $\rho: S \to \PP^2_k$ relative to $\wp$ is a map of linear spaces that takes $\wp$ to the standard frame, i.e., for all $i = 1,2,3,4$, $\rho(p_i) = x_i$.
The set of framed maps $S \to \PP^2_k$ relative to $\wp$ is naturally parametrized by a subvariety $\mc{F}_\wp(S)$ of $(\PP^2_k)^{n-4}$. The \emph{framed realization space} $\mc{R}_\wp(S)$ with respect to the combinatorial frame $\wp$ is the quasi-projective subvariety of $\mc{F}_\wp(S)$ parametrizing just those framed maps that are strong realizations of $S$ in $\PP^2_k$.

Given combinatorial frames $\wp, \wp'$ of $S$, there is a natural isomorphism $\mc{R}_\wp(S) \cong \mc{R}_{\wp'}(S)$ defined by $\rho \mapsto \mu_\rho \circ \rho$, where $\mu_\rho$ is the unique element of $\PGL_3$ taking $\rho(\wp')$ to the standard frame. The \emph{realization space} $\mc{R}(S)$ is the variety $\mc{R}_\wp(S)$, defined up to isomorphism.
\end{defn}

\begin{rmk} \label{rem_git}
Definition \ref{def_rs} is consistent with the definition of geometric quotient in geometric invariant theory, which is a general (but technical) method for constructing moduli spaces within the category of varieties. To be precise, given a linear space $S$, define $\mc{R}(S) = \mc{I}^\circ(S) \GITQ \PGL_3$, where $\GITQ$ denotes the categorical quotient. If the linear space $S$ contains a combinatorial frame, then basic arguments from geometric invariant theory show that $\mc{R}(S)$ is a geometric quotient, is a fine moduli space, and is isomorphic to the framed realization space from Definition \ref{def_rs}. This gives a canonical variety representing $\mc{R}(S)$. For details, see \cite{sink}.
\end{rmk}

Not all linear spaces admit combinatorial frames, but all superfigurations do. In fact, superfigurations admit a particularly nice type of combinatorial frame for the purposes of studying $\mc{R}(S)$, which we now define.

\begin{defn}
    A \emph{V-shaped frame} is an ordered choice of $5$ distinct points $(p_1,p_2,p_3,p_4,p_5)$ such that $p_1, p_2, p_3$ are collinear, $p_1, p_4, p_5$ are collinear, and $(p_2, p_3, p_4, p_5)$ is a combinatorial frame.
\end{defn}

\begin{lemma} \label{lemma_frame}
Every superfiguration admits a V-shaped frame. 
\end{lemma}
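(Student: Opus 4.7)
The plan is to construct a V-shaped frame directly from the superfiguration axioms, with no case analysis required. The key observation is that the superfiguration axioms supply exactly the needed freedom: at every point we have at least three full lines to choose from, and on every full line we have at least three points to choose from.

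Start by picking any point $p_1 \in P$. Because $S$ is a superfiguration, $p_1$ lies on at least $3$ full lines, so we may pick any two distinct full lines $\ell_1, \ell_2 \in L$ through $p_1$. Again using the superfiguration property, each $\ell_i$ has at least $3$ points, so we may pick distinct points $p_2, p_3 \in \ell_1 \setminus \{p_1\}$ and distinct points $p_4, p_5 \in \ell_2 \setminus \{p_1\}$. By construction $\{p_1, p_2, p_3\} \subseteq \ell_1$ and $\{p_1, p_4, p_5\} \subseteq \ell_2$, so the first two collinearity conditions in the definition of a V-shaped frame are automatic.

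It remains to verify that the $5$ chosen points are distinct and that $(p_2, p_3, p_4, p_5)$ forms a combinatorial frame. Distinctness reduces to showing $\ell_1 \cap \ell_2 = \{p_1\}$: if $q \in \ell_1 \cap \ell_2$ with $q \neq p_1$, then the two distinct points $p_1$ and $q$ would lie on two distinct lines, contradicting axiom (2) in Definition~\ref{def_linspace}. Hence $p_2, p_3 \in \ell_1 \setminus \ell_2$ and $p_4, p_5 \in \ell_2 \setminus \ell_1$, giving $5$ distinct points. For the combinatorial frame property, suppose some full line $\ell \in L$ contained three of $\{p_2, p_3, p_4, p_5\}$. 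By symmetry, either $\{p_2, p_3\} \subseteq \ell$ or $\{p_4, p_5\} \subseteq \ell$; in the first case, axiom (2) forces $\ell = \ell_1$, so $\ell$ cannot contain any of $p_4, p_5$, and symmetrically in the second case.

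There is no real obstacle here; the lemma is a direct unpacking of the two superfiguration axioms together with the uniqueness of a line through two points. The only thing to be careful about is making sure distinctness is argued from axiom (2) rather than assumed.
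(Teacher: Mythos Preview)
Your proof is correct and follows essentially the same approach as the paper: pick any point $p_1$, two full lines through it, two further points on each, and use the linear space axiom that two points determine a unique line to conclude both distinctness and the combinatorial frame property. The paper's version is terser (it folds the distinctness check into the frame argument), but the logic is identical.
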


\begin{proof}
Let $S$ be a superfiguration. Choose any point $p_1$ of $S$. By the definition of a superfiguration, there are at least $3$ full lines passing through $p$. Choose any $2$ of these lines and call them $\ell_1$ and $\ell_2$. Choose two other points $p_2, p_3$ on $\ell_1$ and two other points $p_4, p_5$ on $\ell_2$. If there were a line $\ell$ containing $3$ of the points $p_2,p_3,p_4,p_5$, then the linear space axioms imply that $\ell$ would be equal to at least one of $\ell_1$ and $\ell_2$. This would imply that there is a point not equal to $p_1$ in $\ell_1 \cap \ell_2$, which is impossible. So $(p_1, p_2, p_3, p_4, p_5)$ is a V-shaped frame.
\end{proof}

Finally, we put together the various pieces involved in our reduction to studying superfigurations.

\begin{prop} \label{prop_red_to_vee}
    For all $n \in \N$, if $\mc{C}$ is a component of an incidence variety over $k$ on $n$ points, then $\mc{C}$ is birational to one of the following:
    \begin{enumerate}
        \item 
 a projective space $\PP^N$, where $2 \leq N \leq 2n$;
        \item $\mc{C}' \times \PGL_3 \times \PP^N$, where $\mc{C'}$ is a component of $\mc{R}(S')$ for some superfiguration $S'$ on $m\leq n$ points, and $0 \leq N \leq 2(n - m)$.
    \end{enumerate}
\end{prop}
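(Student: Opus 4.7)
The plan is to combine Proposition \ref{prop_ls_reduction}, Lemma \ref{lemma_glynn_ag}, and Lemma \ref{lemma_frame} via strong induction on $n$. By Proposition \ref{prop_ls_reduction} I may assume that $\mc{C}$ is a component of $\mc{I}(n',L)$ for a linear space $(n',L)$ with $n' \leq n$. The base case $n=1$ is immediate: the only linear space is $S_{1,1}$, and $\mc{I}(S_{1,1}) = \PP^2$ gives case (1) with $N=2$.

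For the inductive step I split into three cases. \textbf{Case A:} if $\mc{C} \subseteq \Delta_{ij}$ for some $i \neq j$, then projecting away the $j$-th coordinate identifies $\mc{C}$ with a component of an incidence variety on $n-1$ points, and the inductive hypothesis applies verbatim. \textbf{Case B:} if $\mc{C}$ is generically not a strong realization of $(n,L)$, then some $3$-subset $\{a,b,c\}$ not already contained in a line of $L$ is generically collinear on $\mc{C}$; I enlarge the collinearity conditions by $\{a,b,c\}$, re-apply Proposition \ref{prop_ls_reduction} to return to a linear space, and repeat. This inner loop terminates in at most $\binom{n}{3}$ steps since the number of collinear $3$-subsets is bounded. \textbf{Case C:} $\mc{C}$ is generically strong and not contained in the big diagonal, so $\mc{C}_0 \colonequals \mc{C} \cap \mc{I}^\circ(n,L)$ is Zariski dense in $\mc{C}$.

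Within Case C, if $(n,L)$ is a superfiguration, then Lemma \ref{lemma_frame} produces a combinatorial frame and the discussion surrounding Remark \ref{rem_git} yields $\mc{I}^\circ(n,L) \cong \mc{R}(n,L) \times \PGL_3$, so $\mc{C}_0$ is birational to $\mc{C}' \times \PGL_3$ for some component $\mc{C}'$ of $\mc{R}(n,L)$; this is outcome (2) with $m=n$ and $N=0$. Otherwise $(n,L)$ is not a superfiguration and Lemma \ref{lemma_glynn_ag}(2) produces a linear space $S''$ on fewer than $n$ points together with a component $\mc{C}''$ of $\mc{I}(S'')$ and some $N_0 \in \{0,1,2\}$ such that $\mc{C}$ is birational to $\mc{C}'' \times \PP^{N_0}$; I invoke the inductive hypothesis on $\mc{C}''$ and absorb the extra $\PP^{N_0}$ into the output.

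To track the dimension bounds, observe that Cases A and B introduce no new projective factors, while each invocation of Lemma \ref{lemma_glynn_ag}(2) in Case C contributes a factor $\PP^{N_i}$ with $N_i \leq 2$ and strictly decreases $n$. The identity that $\PP^a \times \PP^b$ is birational to $\PP^{a+b}$ collapses the accumulated factors into a single $\PP^{N}$ whose exponent is the sum of the $N_i$. If the recursion terminates at a superfiguration on $m \leq n$ points, the output is case (2) with $\PGL_3$ attached and $0 \leq N \leq 2(n-m)$; if it instead bottoms out at $n=1$, the terminal $\PP^2$ combines with the accumulated factors to yield case (1) with $2 \leq N \leq 2 + 2(n-1) = 2n$. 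The main obstacle is the bookkeeping in Case B: one must verify that the inner loop enlarging the line set terminates before the outer induction on $n$ is needed, and that the projective factors absorb cleanly through nested Case A and Case C reductions. Once these points are in place, the rest of the argument is a routine telescoping.
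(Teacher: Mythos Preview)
Your proof is correct and follows essentially the same approach as the paper's: both use induction on $n$, reduce to linear spaces via Proposition~\ref{prop_ls_reduction}, apply Lemma~\ref{lemma_glynn_ag} to non-superfigurations, and use the frame/$\PGL_3$ factorization for superfigurations. The only organizational difference is that the paper replaces your Case~A/B split and the inner loop of Case~B with a single descending induction on the poset $(\LS_n,\leq)$, which cleanly absorbs the termination bookkeeping you flag at the end.
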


\begin{proof}
We do induction on $n$. By Proposition \ref{prop_ls_reduction}, it suffices to prove this classification for linear spaces $S = ([n], L)$. If $n = 1$, there is only one linear space $S_{1,1}$ up to isomorphism, and $\mc{I}(S_{1,1}) = \PP^2$, which is type (1). Now assume the claim holds for all $1 \leq n' < n$; we shall prove it for $n$. 

If $S$ is not a superfiguration, then again since $n > 1$, Lemma \ref{lemma_glynn_ag} (2) shows that any component of $\mc{I}(S)$ is birational to $\mc{C}' \times \PP^N,\; N \in \{0,1,2\}$, where $\mc{C}'$ is a component of a linear space on strictly fewer points. By the inductive hypothesis, $\mc{C}'$ is of type (1) or (2); thus so is $\mc{C}$.

Now we do a further (descending) induction on the poset $(\LS_n, \leq)$. There is a maximal linear space $S = ([n], \{[n]\})$. It is easy to check directly that every component of $\mc{I}(S)$ is of type (1).

Now, fix $S$ and suppose that the claim holds for all $T > S$ in $\LS_n$. We have already finished the case where $S$ is not a superfiguration, so assume $S$ is a superfiguration. In particular, $S$ has a combinatorial frame $\wp = (p_1, p_2, p_3, p_4)$ by Lemma \ref{lemma_frame}. If the images of these $4$ points are generically in general linear position on $\mc{C}$, then since $\PGL_3$ acts sharply $4$-transitively on frames in $\PP^2$, there is a component $\bar{\mc{C}}$ of $\mc{R}(L)$ such that $\mc{C}$ is birational to $\bar{\mc{C}} \times \PGL_3$, which is type (2). Otherwise, there is a linear condition not prescribed by $L$ that holds everywhere on $\mc{C}$, so $\mc{C}$ is contained in $\mc{I}(L')$, for some $L' > L$. Since $\mc{I}(L') \subseteq \mc{I}(L)$, we see that $\mc{C}$ is a component of $\mc{I}(L')$, which has already been described by the inductive hypothesis.
\end{proof}

We could in principle describe $\mc{R}(S)$ for every superfiguration $S$, but there is a mild computational advantage in working with closed affine schemes as opposed to quasi-projective varieties. For that reason, we introduce a scheme that makes special use of V-shaped frames.

\begin{defn} \label{def_v_scheme}
Consider a superfiguration $S=(n,L)$ such that:
\begin{itemize}
\item the points $(1, 2, 3, 4, 5)$ are a V-shaped frame;
\item the points other than $1, 2, 3$ on the full line containing $1, 2, 3$ are ordered last.
\end{itemize}
We call $S$ a \emph{framed superfiguration}. 

Let $R$ be a commutative ring, assumed to be $\Q$ unless otherwise specified. We associate an affine $R$-scheme $X_S$ to $S$ as follows. Let $n'$ be the number of points of $S$ other than $4, 5$ that are not on the full line containing $1,2,3$. Let $n'' = n - n' - 5$.  
Consider the space of matrices of the form
\[
\begin{pmatrix}
0 & 0 & 0 & 1 & 1 & 1 & \hdots & 1 & 0 & \hdots & 0 \\
1 & 0 & 1 & 1 & 0 & y_1 & \hdots & y_{n'} & 1 & \hdots & 1 \\
1 & 1 & 0 & 1 & 0 & z_1 & \hdots & z_{n'} & w_{1} & \hdots & w_{n''} \\
\end{pmatrix}.
\]
For any $1 \leq i_1, i_2, i_3 \leq n$, let $\det[ i_1 \; i_2 \; i_3 ]$ denote the determinant of the minor formed by columns $i_1, i_2, i_3$, considered as a polynomial in the $y_j, z_j, w_j$. Let $I_S$ be the ideal generated by $\det [i_1 \; i_2 \; i_3]$ for each combinatorially collinear triple $(i_1, i_2, i_3)$ of distinct points of $S$. Define $X_S$ to be the closed subscheme of $\A^{2n'+n''}_R$ with ideal $I_S$. The reduced scheme structure on $X_S$ is denoted $(X_S)_\red$. 
\end{defn}

Informally, it is helpful to think of $X_S$ as a moduli space of weak realizations of $S$ that are strong with respect to the first five points. 

In our applications in Section \ref{sec_census}, we have $n \leq 10$, and it is always possible to select a frame so that $n' =n-5$, $n'' = 0$. For some larger superfigurations, e.g. $13_4$-configurations, there are no $3$-point lines, hence no frames with $n' = n-5$.

\begin{rmk}
Each component of $\mc{R}(S)$ is isomorphic to a Zariski open subset of $(X_S)_\red$. But $X_S$ may have more components, due to the distinction between strong and weak realizations. Indeed it may be the case that $\mc{R}(S)$ is empty while $X_S$ is not. Further, the number of components of $X_S$ depends on the ordering of the points of $S$, since the first $5$ points play a distinguished role in the definition. 
\end{rmk}

\begin{example}
    The following example, related to the Desargues configuration, demonstrates the role that a choice of V-shaped frame plays in this analysis. Let 
\begin{align*}
    L = \{ & \{ 1,2,3 \}, \{1,4,5\}, \{1,6,7\}, \{2,4,8\}, \{3,5,8\}, \\
& \{2,6,9\}, \{3,7,9\}, \{4,6,10\}, \{5,7,10\}\}.
\end{align*}
Let
\begin{align*}
S_1 &= (10, L ), & \textrm{(the Desargues configuration minus one line),}\\
S_2 &= (10, L \cup \{8,9,10\}) & \textrm{(the Desargues configuration),}\\
S_3 &= (11, L \cup \{\{1,8,9 \}, \{1,10,11\}\}),\\
S_{4} &= (11, L \cup \{1,8,9,10,11 \}).
\end{align*}
The scheme $X_{S_3}$ is not empty, as one may check directly.  But if we re-order the points so that the first five points are $(1,8,9,10,11)$, the resulting variety is empty. To see this, we argue as follows. By Desargues' theorem, every weak realization of $S_1$ is a weak realization of $S_2$. So every weak realization of $S_3$ is a weak realization of 
$S_4$. In particular, there are no weak realizations of $S_3$ in which the points $8,9,10,11$ are in general position.
\end{example}

\section{Census} \label{sec_census}

This section takes up the study of incidence varieties $\mc{I}(n,L)$ with $n \leq 10$ points. We prove Theorem \ref{thm_main_bir} by explicitly describing the realization space of each $n_3$-superfiguration with $n \leq 10$. 

The $n < 10$ case is handled by the following theorem.

\begin{thm}[\cite{MR1391025}] \label{thm_census_9}
    Let $S$ be a superfiguration on at most $9$ points. Each $\bar{\Q}$-component of the realization space $\mc{R}(S)$ is a Zariski open subset of a projective space $\PP^N$, where $0 \leq N \leq 2$. 
\end{thm}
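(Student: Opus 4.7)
The plan is to combine a combinatorial enumeration of small superfigurations with an explicit computation of each realization space. First I would enumerate all $n_3$-superfigurations with $n \leq 9$ up to isomorphism. The axioms force each of the $n$ points to lie on at least three full lines of size at least three, which is very restrictive when $n$ is small, and a standard incidence-matrix enumeration (of the Betten--Glynn type already referenced in the paper) produces a short list, just a dozen or so, of superfigurations to analyze directly.

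For each superfiguration $S = (n, L)$ on the list, I would invoke Lemma \ref{lemma_frame} to pick a V-shaped frame $(p_1, \ldots, p_5)$ and then use Definition \ref{def_v_scheme} to present $\mc{R}(S)$ as a Zariski open subset of the reduced affine scheme $(X_S)_\red \subseteq \A^{2(n-5)}$. Since $n \leq 9$, the ambient space has dimension at most $8$, and the defining ideal $I_S$ is generated by $3\times 3$ determinants, one per combinatorially collinear triple not already handled by the frame. I would compute a Gr\"obner basis for $I_S$ over $\Q$ and eliminate variables using the linear relations that arise from lines passing through two already-placed frame points. Because each point beyond the frame lies on at least three full lines, most such points are forced onto the intersection of two previously-defined lines, collapsing two parameters to zero. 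After these eliminations, the remaining scheme has dimension at most $2$ and typically admits a transparent rational parametrization exhibiting it as an open subset of $\PP^N$ with $N \in \{0, 1, 2\}$.

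The main obstacle is to identify and discard the combinatorial superfigurations whose realization spaces are empty over $\bar{\Q}$, the archetype being the Fano plane, which forces characteristic $2$. In such cases the Gr\"obner reduction over $\Q$ produces a nonzero constant, signaling that $(X_S)_\red$ has no $\bar{\Q}$-points, and these cases are simply removed from the census. For the remaining superfigurations, one must still verify that each geometric component is irreducible and birational to $\PP^N$ rather than, say, a positive-genus curve or an irrational surface; both checks are straightforward in this dimension range, since rationality can be read off from the explicit parametrization produced by elimination and irreducibility follows from a primary decomposition of the reduced ideal. Because $n \leq 9$, the computations remain elementary in every case, which is why the classical reference \cite{MR1391025} could handle them all directly; the present paper follows the same strategy but must go further to treat the qualitatively new behavior that appears at $n = 10$.
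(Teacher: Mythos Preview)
The paper does not supply its own proof of this theorem; it is stated with a citation to \cite{MR1391025} and treated as a known result, with Section~\ref{sec_examples_9} recording some details (Fano, M\"obius--Kantor, Pappus, and the remaining $9_3$-superfigurations) drawn from that reference. Your proposal correctly reconstructs the strategy actually used there and mirrored in the paper's own Computation~\ref{comp_comp} for $n=10$: enumerate the (twelve) $n_3$-superfigurations with $n\le 9$, pick a V-shaped frame, write down the determinantal ideal, eliminate, and read off rationality of each component. So your approach is essentially the same as the one the paper relies on, just restated in the paper's later notation.
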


The main step in the argument for $n = 10$ is the following computation, which uses computer algebra to describe the affine schemes $X_S$ associated to $10_3$-superfigurations $S$. We omit $10_3$-configurations, as they were analyzed in \cite{sink}.

\begin{comp} \label{comp_comp}
For each $10_3$-superfiguration $S$ that is not a $10_3$-configuration, there is a choice of V-shaped frame such that each $\bar{\Q}$-component of $(X_S)_\red$ is birational to one of the following:
    \begin{enumerate}
        \item a projective space $\PP^N_{\bar{\Q}}$ $(0 \leq N \leq 2)$, 
        \item a genus $1$ curve.
    \end{enumerate}
\end{comp}

\begin{proof}
We conducted the following calculations in Magma and SageMath \cite{MR1484478, sage}; calculations are stored in the code repository \cite{site}. 

We imported a complete list of $10_3$-superfigurations up to isomorphism from the database \cite{prev_website}; see the associated article \cite[Section 3]{MR3721583}. There are $141$ isomorphism classes that are not classes of $10_3$-configurations. We chose a framed representative $S$ of each isomorphism class. For each $S$:
\begin{enumerate}
    \item[(i)] We computed the affine $\Q$-scheme $X_S$, presented as a polynomial ring $A_S$ together with an ideal $I_S \subset A_S$ given by a list of generators.
    \item[(ii)] We computed a Gr\"obner basis of $I_S$, then performed some elementary substitutions over $\Q$ (if possible) to obtain a polynomial ring $\tilde{A}_S$ in fewer variables and an ideal $\tilde{I}_S \subset \tilde{A}_S$, presented in terms of generators, such that $\tilde{A}_S/\tilde{I}_S \cong A_S/I_S$ as $\Q$-algebras. 
    \item[(iii)] We constructed $\tilde{X}_S = (\Spec \tilde{A}_S/\tilde{I}_S)_{\red}$, i.e., the $\Q$-scheme defined by the simplified ideal. Note that $\tilde{X}_S \cong (X_S)_{\red}$ as $\Q$-schemes.
    \item[(iv)] We calculated the $\Q$-components of $\tilde{X}_S$. For each $\Q$-component $J$, we calculated $\dim J$.
\end{enumerate}
We recorded a list $\mc{J}$ of the $\Q$-components $J$ that appeared in this way, sorted by dimension. This showed that $\dim J \leq 2$ for all $J \in \mc{J}$.

For each $J \in \mc{J}$ with $\dim J = 2$, we checked that $J \cong \A^2_\Q$ as $\Q$-schemes. 

For each $J \in \mc{J}$ with $\dim J = 1$, we computed the $\bar{\Q}$-components of $J$. For each $\bar{\Q}$-component, we computed the geometric genus. 
We found three possibilities: each $J$ was birational to $\PP^1_\Q$, was a genus $1$ curve, or split into two Galois-conjugate $\bar{\Q}$-components isomorphic to $\A^1_{\bar{\Q}}$.

For each $J \in \mc{J}$ with $\dim J = 0$, it is automatic that the geometric components are points.
\end{proof}

Gathering Computation \ref{comp_comp}, the $n \leq 9$ case, and the $10_3$-configuration case, we obtain the following classification.

\begin{thm}\label{thm_census}
    Let $S$ be a superfiguration on at most $10$ points. Each $\bar{\Q}$-component of the realization space $\mc{R}(S)$ is a Zariski open subset of one of the following varieties:
    \begin{enumerate}
        \item a projective space $\PP^N$ $(0 \leq N \leq 3)$, 
        \item a genus $1$ curve,
        \item a K3 surface.
    \end{enumerate}
\end{thm}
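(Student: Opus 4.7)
The strategy is to partition the superfigurations on at most $10$ points into three cases and invoke an existing classification for each, patching the results together to cover every possibility claimed in the statement.

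First, for $n \leq 9$ I would simply cite Theorem \ref{thm_census_9}, which already gives the conclusion with $0 \leq N \leq 2$, so nothing further is needed. Second, when $n = 10$ and $S$ is a $10_3$-configuration, I would quote Sink's classification \cite[Theorem 1.2]{sink}, which exhibits each $\bar{\Q}$-component of $\mc{R}(S)$ as an open subset of $\PP^2$, $\PP^3$, or a K3 surface. This is the only source of the $\PP^3$ and K3 components in the theorem. Third, when $S$ is a $10_3$-superfiguration on $10$ points that is not a configuration, I would apply Computation \ref{comp_comp} together with the Remark after Definition \ref{def_v_scheme}: each component of $\mc{R}(S)$ is isomorphic to a Zariski open subset of a component of $(X_S)_\red$, and Computation \ref{comp_comp} asserts that every such geometric component is birational to a projective space $\PP^N$ with $N \leq 2$ or a genus $1$ curve.

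A small point I would address explicitly is that the theorem claims components are \emph{Zariski open subsets} of the named varieties, which is stronger than being birational to them. In dimension $0$ the components are points; in dimension $1$, a smooth quasi-projective curve embeds canonically as a Zariski open subset of its unique smooth projective model, so a birational identification with $\PP^1$ or with a smooth genus $1$ curve upgrades to an open embedding. In dimension $2$, the affine models $\tilde{X}_S$ produced in Computation \ref{comp_comp} are already presented as open subsets of $\A^2_{\bar \Q} \subset \PP^2_{\bar \Q}$, so rationality suffices; and for K3 components, Sink's theorem produces the projective K3 model directly. Thus the union of the three case analyses precisely matches the list in the statement, with the bounds $0 \leq N \leq 3$ realized (the value $N=3$ arising only from Sink's $10_3$-configuration case).

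The main obstacle is entirely contained in Computation \ref{comp_comp}: the birational classification for the $141$ non-configuration $10_3$-superfigurations rests on large Gr\"obner basis and geometric genus calculations that have to be executed and audited case by case. Once that computation is accepted, the proof of Theorem \ref{thm_census} itself is a short assembly argument, since the reduction machinery (Proposition \ref{prop_ls_reduction}, Lemma \ref{lemma_glynn_ag}, Lemma \ref{lemma_frame}, and Proposition \ref{prop_red_to_vee}) has already delivered us to the superfiguration setting where these three classifications apply.
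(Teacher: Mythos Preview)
Your proposal is correct and follows essentially the same three-case decomposition as the paper's own proof: $n\leq 9$ via Theorem~\ref{thm_census_9}, $10_3$-configurations via Sink, and the remaining $10_3$-superfigurations via Computation~\ref{comp_comp} together with the remark that $\mc{R}(S)$ sits as an open subset of $(X_S)_\red$. Your added paragraph upgrading ``birational'' to ``Zariski open subset'' in each dimension is in fact more explicit than the paper, which passes over this point silently; and you correctly include $\PP^3$ in the Sink case (the Desargues configuration), which the paper's proof text omits.
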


\begin{proof}
The $n < 10$ case is Theorem \ref{thm_census_9}. 
If $S$ is a $10_3$-configuration, then $\mc{R}(S)$ is a Zariski open subset of $\PP^2$ or a K3 surface \cite{sink}.
If $S$ is a $10_3$-superfiguration that is not a $10_3$-configuration, then with the choice of combinatorial frame used in Computation \ref{comp_comp}, since each $\bar{\Q}$-component of $\mc{R}(S)$ is a Zariski open subset of a $\bar{\Q}$-component of $(X_S)_{\red}$, each $\bar{\Q}$-component of $\mc{R}(S)$ is a either a Zariski open subset of a projective space $\PP^N$ with $0 \leq N \leq 3$, or a genus $1$ curve.
\end{proof}

We now prove our main theorem.

\begin{proof}[Proof of Theorem \ref{thm_main_bir}]
    The claim is that if $n \leq 10$ and $L$ is a set of subsets of $[n]$, then every geometric component of $\mc{I}(n, L)$ is birational over $\bar{\Q}$ to one of the following varieties:
    \begin{enumerate}
        \item a projective space $\PP^N$, where $2 \leq N \leq 2n$,
        \item a genus $1$ curve $\times \PGL_3$,
        \item a K3 surface $\times \PGL_3$.
    \end{enumerate}
    Since $\PGL_3$ is itself birational to $\PP^8$, this is Proposition \ref{prop_red_to_vee} combined with Theorem \ref{thm_census}.
\end{proof}

 In Section \ref{sec_examples_9} and \ref{sec_examples_10}, we describe the main characteristics of the set of realization space components and catalogue some examples of interest.

\subsection{Superfigurations on up to $9$ points} \label{sec_examples_9}

The realization spaces of the $n_3$-superfigurations for all $n \leq 9$ were studied, albeit in somewhat different language, in \cite{MR1391025, MR3721583}. Some of these spaces are well-known, especially the $n_3$-configurations of Fano, M\"obius-Kantor, and Pappus. For completeness, we record some previously known facts regarding these incidence varieties.

\subsubsection{The Fano plane}
The Fano plane is the unique $7_3$-superfiguration and the unique $7_3$-configuration, up to isomorphism. A framed representative is given by
    $$S_{\Fano} = (7, \{\{1,2,3\}, \{1,4,5\}, \{1, 6, 7\}, \{3,4,7\}, \{ 3,5,6\}, \{2,5,7\}, \{2,4,6\}\} ).$$
    The matrix associated to $S_{\Fano}$ to construct $X_{S_{\Fano}}$ in Definition \ref{def_v_scheme} is
				\[
				\begin{pmatrix}
					0 & 0 & 0 & 1 & 1 & 1&1\\
					1 & 0 & 1 & 1 & 0  &y_1 & y_2 \\
					1 & 1 & 0 & 1 & 0 & z_1 & z_2
				\end{pmatrix}.
				\]
	Each line of $S_{\Fano}$ contributes a determinant to the defining ideal of $X_{S_{\Fano}}$. Two of the determinants are identically $0$ because of the choices of the first five columns. The remaining determinants are $-y_1 + y_2 + z_1 - z_2$, $y_1-1$, $y_2$, $-z_2 + 1$, and $z_1$.

    Over the base ring $R=\Z$, we may substitute $y_1=1,z_2=1,y_2=0,z_1=0$ to obtain the isomorphic scheme $\Spec \Z/\langle 2 \rangle$; indeed, setting all five determinants to $0$ gives a system of equations that reduces to the single equation $-2=0$. So $X_{S_{\Fano}}$ has no points over $\Q$ (i.e., over $\Q$, the determinants generate the unit ideal), but there is a point over $\F_2$.
    
    The expected dimension of $X_{S_{\Fano}}$ over any field is $-1$, since there are $5$ conditions on $4$ variables.
While $X_{S_{\Fano}}$ is indeed empty over $\Q$, we see that there may be dimension jumps in some characteristics. 

\subsubsection{M\"obius-Kantor}
    The M\"obius-Kantor configuration $S_{\MK}$ is the unique $8_3$-superfiguration and the unique $8_3$-configuration, up to isomorphism. 
    A framed representative is given by
    {
    \[
    S_{\MK} = (8, \{\{1,2,3\}, \{1,4,5\}, \{5, 6, 7\}, \{1,7,8\}, \{ 3,5,8\}, \{2,6,8\}, \{3,4,6\},\{2,4,7\} \} ).
    \]
    }
    The ideal defining $X_{S_{\MK}}$ over $\Q$ is generated by $8$ determinants, one for each $3$-point line; again, $2$ are trivial. So $X_{\MK}$ is defined by an ideal with $6$ generators in a polynomial ring of $6$ variables. It is easy to eliminate all but one variable by substitution, yielding
	$$
	X_{S_{\MK}} \cong \Spec \frac{\Q[z]}{(z^2 - z + 1)}.
	$$
The dimension is $0$, as expected. The points are defined over $\Q(\sqrt{-3})$, but not over $\Q$. This  reflects that the M\"obius-Kantor configuration is unrealizable over $\R$.

\subsubsection{The $9_3$-superfigurations}

    There are ten $9_3$-superfigurations up to isomorphism. Their realization spaces were described in \cite{MR1391025} for applications to enumerative geometry over finite fields. We recall some facts about these spaces.
    
    The Pappus configuration $S_{\Pappus}$ is a $9_3$-configuration.
    The ideal defining $X_{S_{\Pappus}}$ over $\Q$ is generated by $9$ determinants, of which $2$ are trivial. Since $X_{S_{\Pappus}}$ is defined by polynomials in $8$ variables, the expected dimension is $1$. But by the Pappus Theorem, any one of the $9$ lines is forced by the simultaneous occurrence of the others, so in fact $\dim X_{S_{\Pappus}} = 2$. It is birational to $\PP^2$. 
    
    The other $\bar{\Q}$-components associated to $9_3$-superfigurations are each birational to $\PP^0$ or $\PP^1$. The field of definition of each $\PP^1$ is $\Q$, and the fields of definition of the $\PP^0$'s are $\Q$, $\Q(\sqrt{-3})$, $\Q(\sqrt{5})$, and $\Q(\sqrt{-1})$. Because all these fields are quadratic, for each of these affine varieties $X_S$, the number of points of $X_S$ over $\F_q$ is given by a quasipolynomial in $q$ \cite{MR1391025}.

\subsection{Superfigurations on $10$ points} \label{sec_examples_10}

\subsubsection{Dimension $3$ components}
There are ten $10_3$-configurations. The most famous is the Desargues configuration, which appears in the theorem of the same name:
\begin{align*}
S_{\textrm{Desargues}} = (10, \{ & \{ 1,2,3 \}, \{1,4,5\}, \{1,6,7\}, \{8,9,10\}, \{2,4,8\}, \\
&\{3,5,8\}, \{2,6,9\}, \{3,7,9\}, \{4,6,10\}, \{5,7,10\}\}).
\end{align*}

The realization space of the Desargues $10_3$-configuration has expected dimension $2$, but is in fact birational to $\PP^3$. This is similar to what we saw with the Pappus configuration. Desargues is the sole superfiguration with at most $10$ points and a $3$-dimensional realization space. It can be drawn with straight lines, reflecting the fact that it has realizations over $\R$.

\begin{figure}[h]
\includegraphics[height=1.7in]{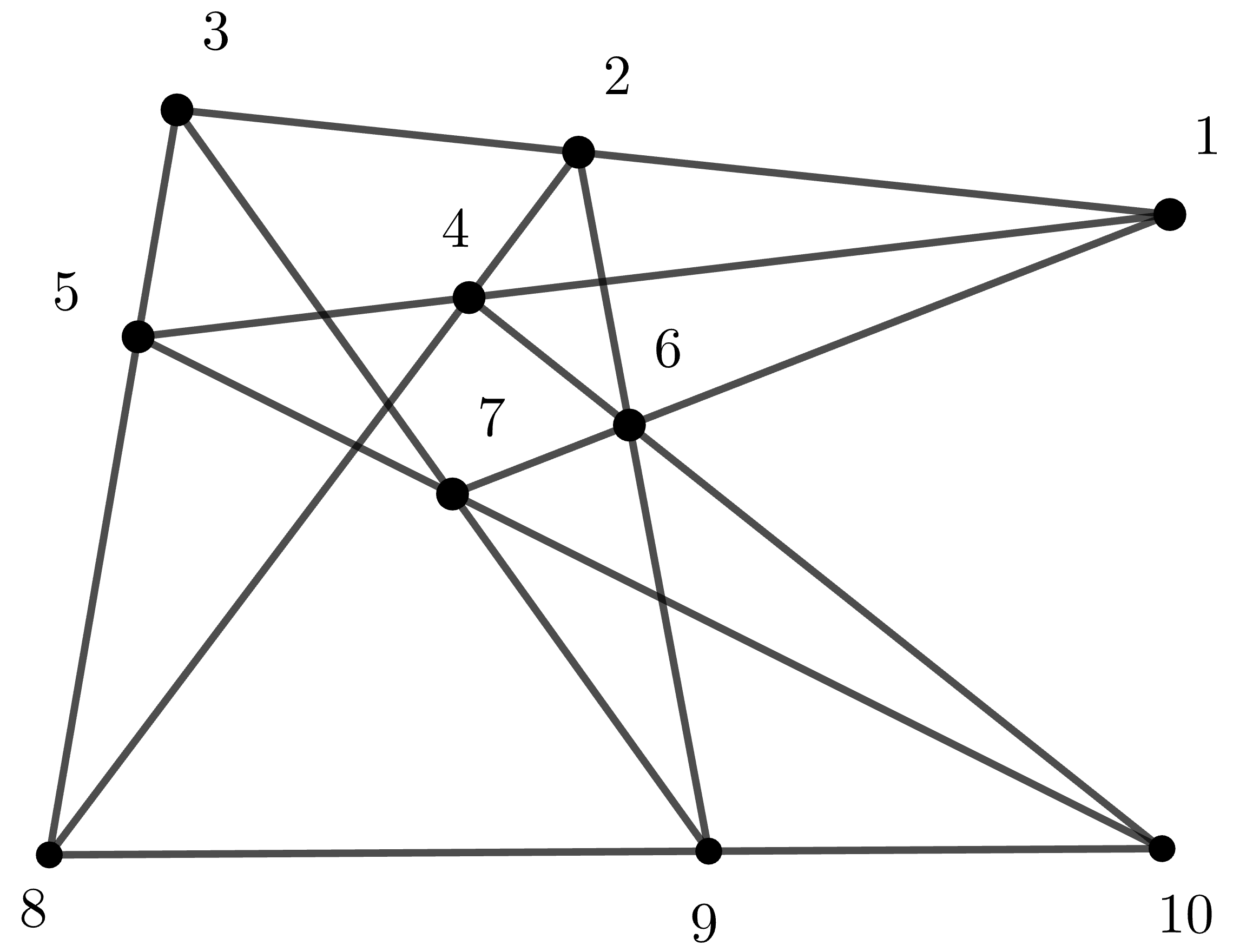}
\caption{The Desargues configuration.}
\label{fig_desargues}
\end{figure}

\subsubsection{Dimension $2$ components}

There are four $10_3$-configurations for which the realization space is birational to a K3 surface. Four $10_3$-configurations have realization varieties that are birational to $\PP^2$ \cite{sink}. We show here that there is precisely one superfiguration that is not a $10_3$-configuration, yet has a $2$-dimensional realization space.

Computation \ref{comp_comp} found that, as $S$ ranged over our list of framed superfigurations that are not $10_3$-configurations, precisely two of the varieties $\tilde{X}_S$ had a $2$-dimensional $\bar{\Q}$-component, and each was birational to $\PP^2$. These components are surprising because the maximum \emph{expected} dimension of these varieties is $1$. 

Let us take a closer look at each. Recall that, while the list of components of $\tilde{X}_S$ depends on the choice of V-shaped frame of $S$, we at least know that $\mc{R}(S)$ may be embedded as a Zariski open (but not necessarily dense) subset of $\tilde{X}_S$. So if $\mc{R}(S)$ is $2$-dimensional, then $\tilde{X}_S$ has a $2$-dimensional component.

\begin{example}
The first $2$-dimensional component of a variety $\tilde{X}_S$ appeared for the \emph{special Desargues superfiguration} (Figure \ref{fig_spdesargues}), defined by
\begin{align*}
S = (10, \{ &\{1,8,9,10\}, \{ 1,2,3 \}, \{1,4,5\}, \{1,6,7\}, \{8,9,10\}, \{2,4,8\}, \\
&\{3,5,8\}, \{2,6,9\}, \{3,7,9\}, \{4,6,10\}, \{5,7,10\}\}).
\end{align*}

\begin{figure}[h]
\includegraphics[height=1.7in]{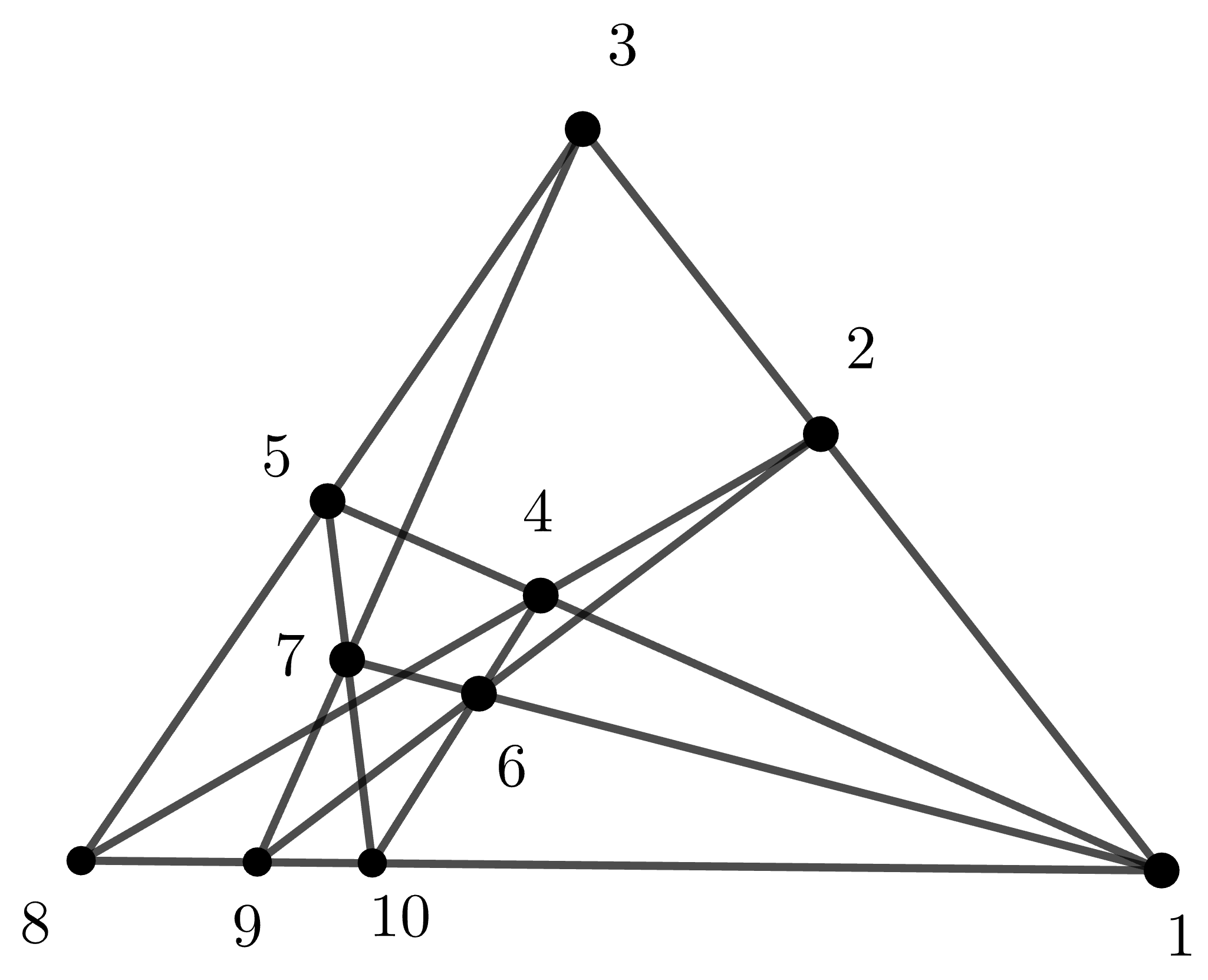}
\caption{The special Desargues superfiguration.}
\label{fig_spdesargues}
\end{figure}

The superfiguration $S$ plays an important role in the moduli problem for Desargues configurations \cite{MR1924759}. One may check directly that $\mc{R}(S)$ is birational to $\PP^2$, even though the expected dimension of $\mc{R}(S)$ is $1$. This is a manifestation of Desargues' theorem. Specifically, observe that $S$ is the degeneration of the Desargues configuration in which point $10$ lies on the line containing $1,2,3$, so the realization space $\mc{R}(S)$ is defined by one more algebraic condition than the realization space of the Desargues configuration, which is $3$-dimensional.
\end{example}

\begin{example}
    The other $2$-dimensional component of a variety $\tilde{X}_S$ appeared for the superfiguration
    \begin{align*}
S = (10, \{ &\{ 1,2,3 \}, \{1,4,5\}, \{1,6,7\}, \{1,8,9\}, \{2,4,10\}, \\
&\{2,6,8\}, \{3,4,8\}, \{3,6,10\}, \{5,6,9\}, \{5,7,8\}, \{7,9,10\}\}).
\end{align*}
\end{example}
Here, even though $\tilde{X}_S$ has a component birational to $\PP^2$, in fact $\mc{R}(S)$ has dimension $1$, consistent with the expected dimension. The $2$-dimensional component of $\tilde{X}_S$ appears because of the degeneration $S'$ of $S$ where points $4,6,8,10$ coincide. Any assignment of the remaining points $7,9$ on the line containing $1,4,5$ (i.e., $y=z$) defines a framed map of $S$ to~$\PP^2$.

\subsubsection{Dimension $1$ components}

The geometric components of dimension $1$ are algebraic curves of genus $0$ or $1$. 

Curves of genus $0$ are rational. The arithmetic of these curves determines realizability of superfigurations over specific fields. Our computation shows that the curves of genus $0$ each have field of definition $\Q$, $\Q(\sqrt{-3})$, or $\Q(\sqrt{5})$. These last two fields occur only for pairs of lines that split over a quadratic extension.

The genus $1$ curves are defined over $\Q$, and we may again consider their arithmetic to decide realizability over specific fields. We find that each genus $1$ curve that appears admits a $\Q$-rational point, so they are elliptic curves over $\Q$ (after choosing a base point). Their Cremona labels are 14a4, 11a3, 15a8, and 37a1. Each of these elliptic curves is a model for a modular curve, namely $X_1(11), X_1(14),$ $X_1(15),$ and $X_0(37)$ modulo the Fricke involution given by $\tau \mapsto -\frac{1}{37\tau}$. A geometric explanation for the appearance of $X_1(11)$ was suggested to us by Noam Elkies.

\begin{example}[the modular curve $X_1(11)$]
    Given an elliptic curve $E$ with base point $O$ and a marked 11-torsion point $P \neq O$, define $P_0 = O, P_1 = P, P_2 = [2]P\ldots , P_{10} = [10]P$. The sets $\{i,j,k\}$ for which $i+j+k \equiv  0 \pmod{11}$ are the lines of an $11_3$-superfiguration with point set $\{0,1,\ldots,11\}$.
    
	Given $0 \le i \le 10$, let $T_i$ be the induced linear space on $\{0,1,2,3,\ldots,10\} \smallsetminus \{i\}$. For all $i \neq 0$, $T_i$ is isomorphic to the $10$-point superfiguration depicted in Figure \ref{fig_s95}, defined by
	\begin{align*}
		S = (10,\; &\big\{\{1, 2, 3\}, \{1, 4, 5\}, \{1, 6, 7\}, \{1, 8, 9\}, \{2, 4, 10\}, \{2, 5, 9\},\\
		& \{2, 6, 8\}, \{3, 5, 6\}, \{3, 9, 10\}, \{4, 7, 8\}, \{5, 7, 10\}\big\}).
	\end{align*}
    The variety $X_S$ corresponding to $S$ is birational to an elliptic curve with Cremona label 11a3 and is a model for the modular curve $X_1(11)$. Further, $T_0$ is the $10_3$-configuration for which $\mc{R}(T_0)$ is birational to a K3 surface of discriminant $-11$ \cite{sink}.
    	\begin{figure}[h]
\begin{center}
\includegraphics[height=2in]{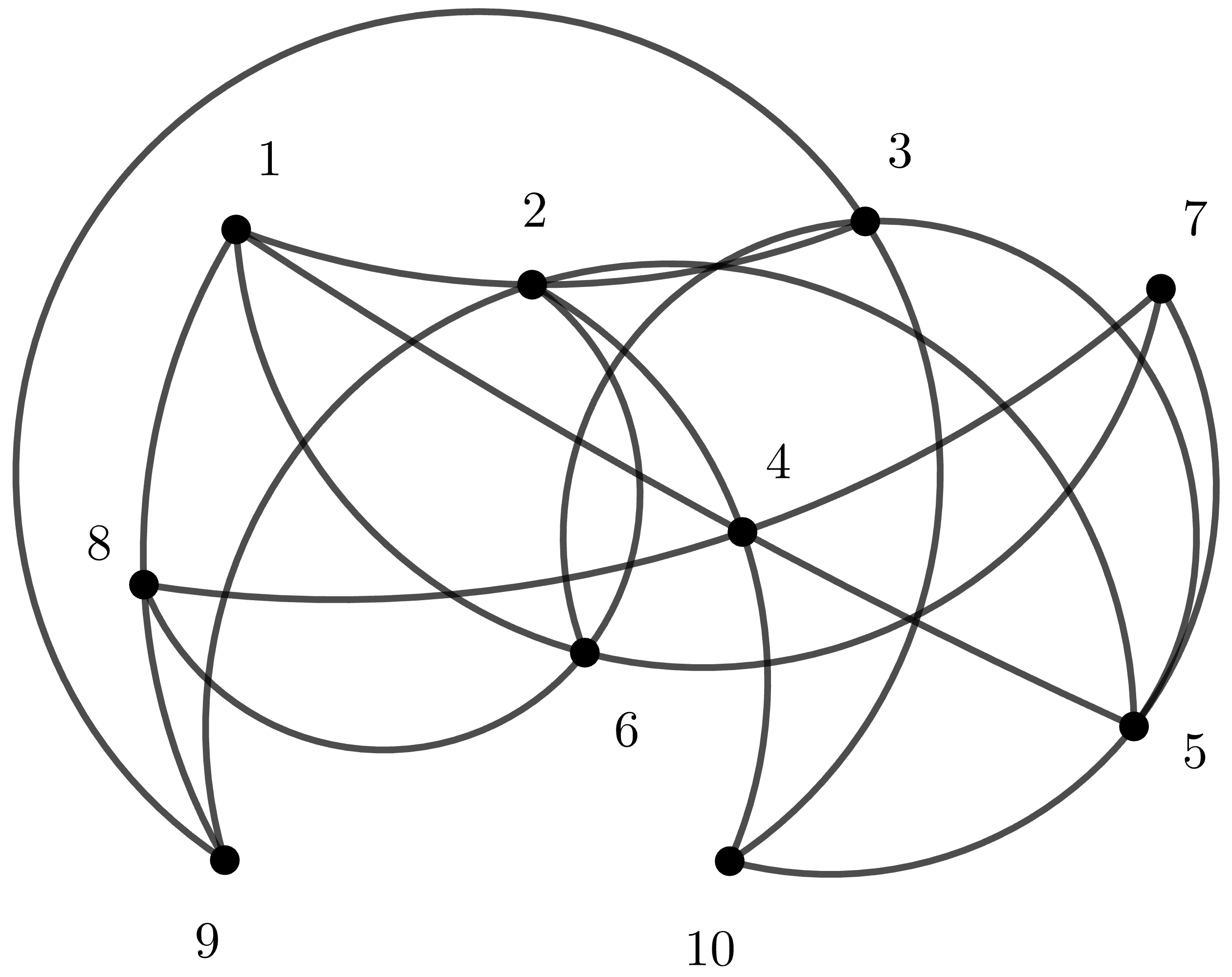}
\end{center}
\caption{A superfiguration $S$ related to a modular curve.}
\label{fig_s95}
\end{figure}
\end{example}

It would be interesting to see geometric reasons for the other modular curves to appear.

\subsubsection{Dimension $0$ components}
The $\bar{\Q}$-components of dimension $0$ are points. The $\Q$-components are important from the perspective of arithmetic and enumerative problems. For all superfigurations with strictly fewer than $10$ points, the $\bar{\Q}$-components of dimension $0$ have field of definition $\Q$, $\Q(\sqrt{-1})$, $\Q(\sqrt{-3})$, or $\Q(\sqrt{5})$.

The possibilities with $n = 10$ are more complicated, but in all cases, the field of definition has degree at most $4$ over $\Q$. We give a minimal polynomial for each: 
\[
\begin{matrix}
& x^2 - 2 & & x^3 - x -1, & & x^4 - x^3 +x^2-x + 1\\ 
& x^2 - 5 & & x^3 - x^2 + x +1 \\
& x^2 + 1 & & x^3 - 5x^2 + 6x - 1 \\
& x^2 + 3 \\
& x^2 + 7 \\
\end{matrix}
\]

\begin{example}\label{s_3_ex}
	Consider
\begin{align*}
		S = (10, \big\{
        & \{1, 2, 3\},
 \{1, 4, 5\},
 \{1, 6, 7\},
 \{1, 8, 9\},
 \{2, 4, 10\},
 \{2, 5, 7\}, \\
 &\{2, 6, 8\},
 \{3, 4, 7\},
 \{3, 5, 8\},
 \{4, 6, 9\},
 \{5, 6, 10\},
 \{7, 9, 10\}
        \big\} ).
	\end{align*}
    Then 
    $$\tilde{X}_S \cong \Spec \frac{\Q[x]}{(x^3 - x^2 + x +1)}.$$
    The number field $\Q[x]/(x^3 - x^2 + x +1)$ has discriminant $-44$ and is not Galois.
\end{example}

\begin{example}
There is exactly one superfiguration for which the realization space has a quartic field of definition:
	\begin{align*}
		S = (10, \big\{
        &\{1, 2, 3\}, \{1, 4, 5\}, \{1, 6, 7\}, \{1, 8, 9\}, \{2, 4, 10\}, \{2, 5, 9\}, \\
        &\{2, 6, 8\}, \{3, 4, 7\}, \{3, 6, 10\}, \{4, 6, 9\}, \{5, 7, 8, 10\}
        \big\}).
	\end{align*}
    For this superfiguration,
    $$\tilde{X}_S \cong \Spec \frac{\Q[x]}{(x^4 - x^3 + x^2 - x + 1)}.$$
Note that $x^4 - x^3 + x^2 - x + 1$ is the 10\textsuperscript{th} cyclotomic polynomial.  The discriminant of the cyclotomic field it generates is $125$, which is the largest discriminant that appears among the number fields arising in our classification.
\end{example}

\begin{example} \label{ex_starfish}
    The \emph{starfish} is the unique superfiguration class with 10 points and a 5-point line; see Figure \ref{fig_starfish}. It is defined by
	\begin{align*}
		S_{\Star} = ( 10, & \; \big\{\{1, 2, 3\}, \{1, 4, 5\}, \{1, 6, 8\}, \{1, 7, 9\}, \{2, 4, 10\},\\ 
  & \{2, 5, 9\}, \{2, 6, 7\},
		\{3, 4, 6\},  \{4, 8, 9\}, \{6, 9, 10\}, &\\
        & \{3, 5, 7, 8, 10\}\big\} ).
	\end{align*}
	The starfish is the incidence structure formed by the ten lines connecting the vertices of a regular pentagon in the real affine plane. Projectivizing, the five pairs of parallels each intersect at the line at infinity, which gives us the required 5-point line. One can further check that any strong realization of $S_{\Star}$ is projectively equivalent over $\C$ to a real regular pentagon. 
\begin{figure}[h]
\begin{center}
\includegraphics[height=1.8in]{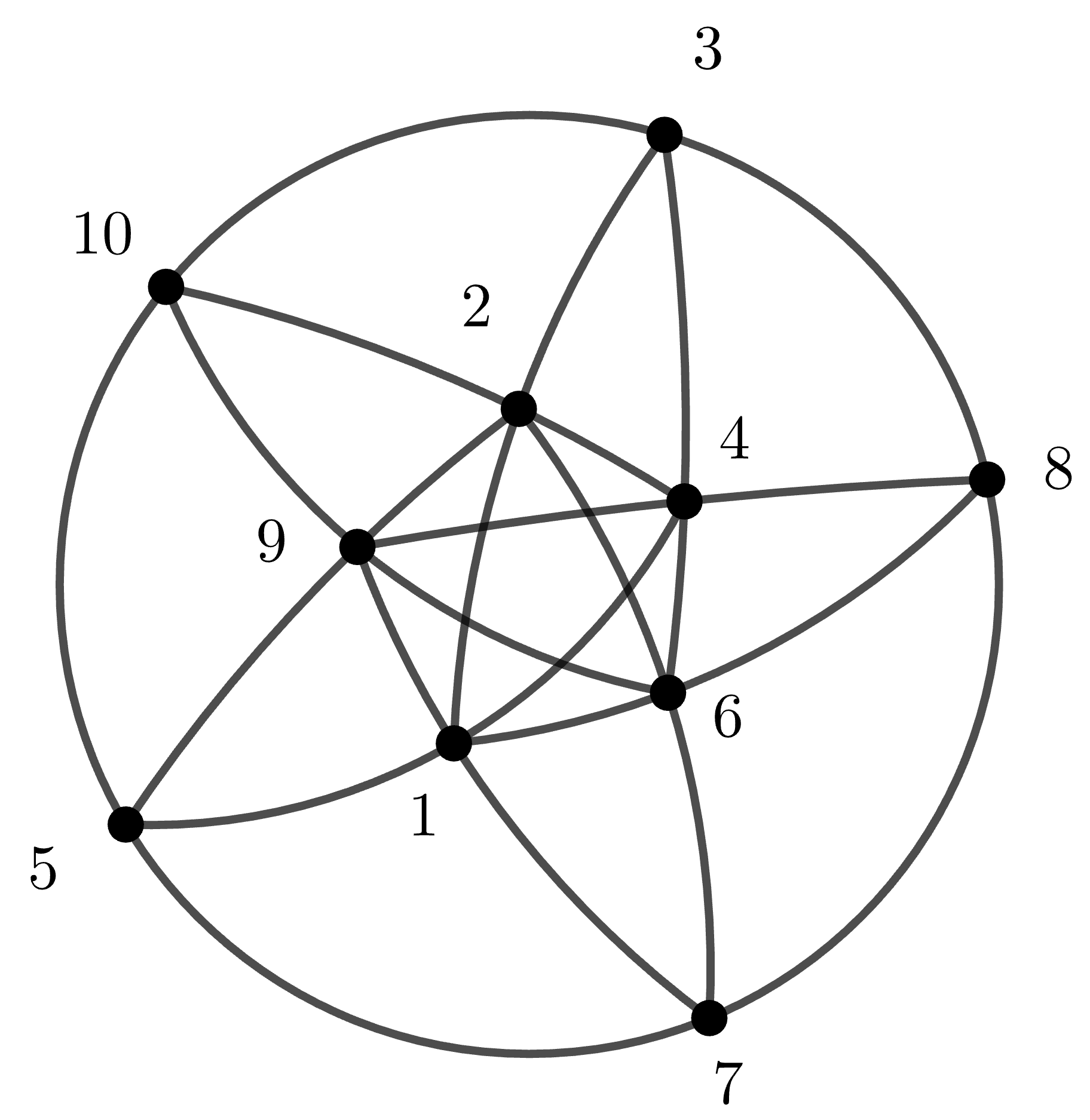}
\end{center}
\caption{The starfish $S_{\Star}$.}
\label{fig_starfish}
\end{figure}

The ideal defining $X_{S_{\Star}}$ is generated by 20 determinants: one for each 3-point line, and $\binom{5}{3} = 10$ for the 5-point line. After taking into account dependencies of the determinants from the $5$-point line, the expected dimension is $-1$. We compute
	$$
	X_{S_{\Star}} \cong \Spec \frac{\Q[x]}{(x^2 + x - 1)}.
	$$ 
The field of definition of $X_{S_{\Star}}$ is $\Q(\sqrt{5})$, reflecting the appearance of $\sqrt{5}$ in the construction of a regular pentagon.
\end{example}

\subsubsection{Unrealizable superfigurations}

There is one ${10}_3$-configuration $S$ which has no strong realizations over \emph{any} field.
To prove its unrealizability, one can show that $S$ is two Pappus configurations pasted together, minus some lines \cite{MR1706508}. Such linear spaces are sometimes called \emph{anti-Pappian} because they do not embed in Pappian projective planes (that is, projective planes over fields). There are many more unrealizable superfigurations beyond the well-known unrealizable $10_3$-configuration.   

\begin{example} \label{ex_anti_pappian}
     Consider the superfiguration
    \begin{align*}
        S = \big( 10, \{ &\{1, 2, 3\}, \{1, 4, 5\}, \{1, 6, 7\}, \{2, 6, 10\}, \{4, 9, 10\}, \{3, 6, 8\}, \\
        & \{7, 8, 9\}, \{2, 4, 8\}, \{2, 5, 7\}, \{5, 6, 9\}, \{3, 7, 10\}  \} \big).
    \end{align*}
    
    \begin{figure}[h]
        \centering
        \includegraphics[height=2in]{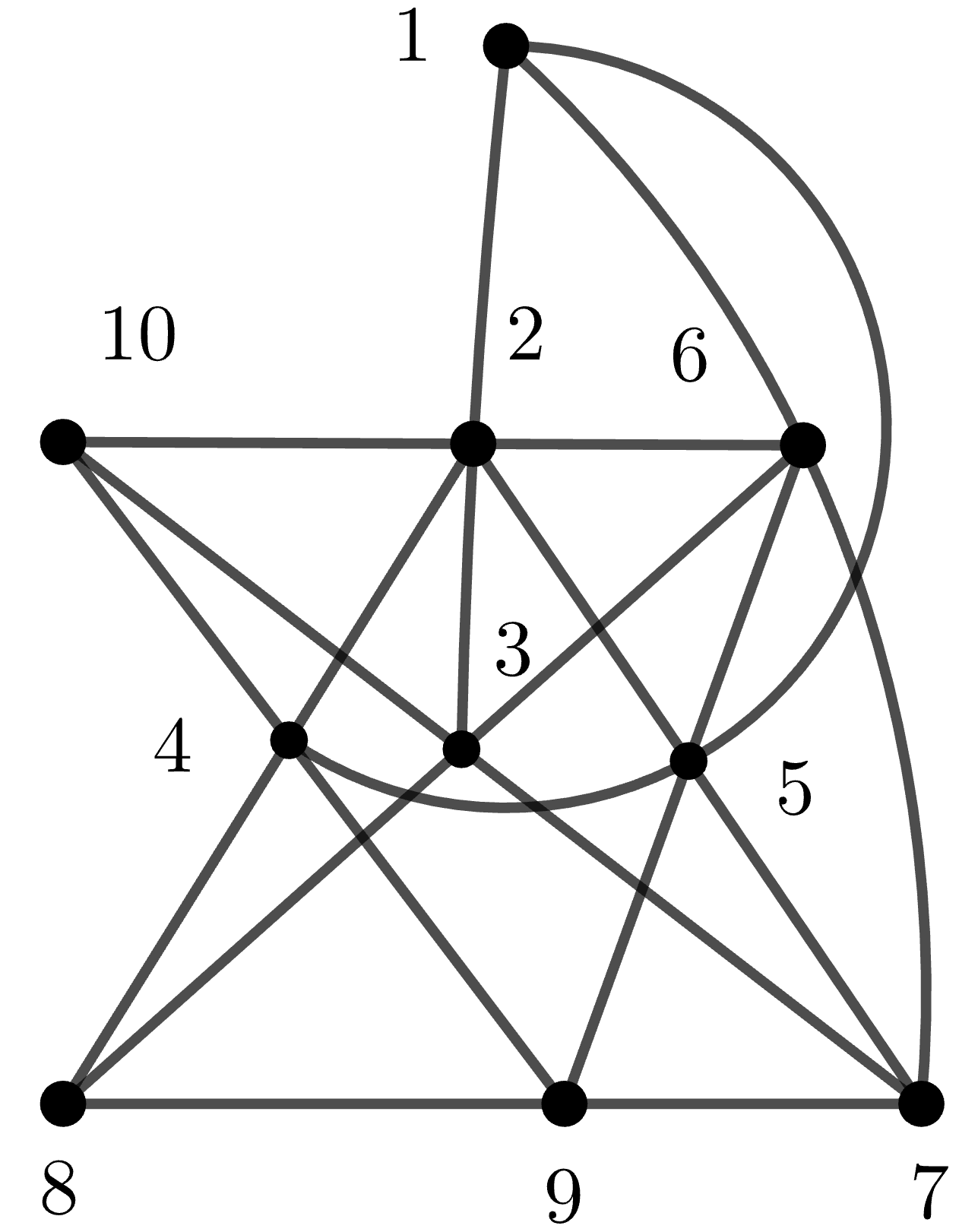}
        \caption{A superfiguration forbidden by Pappus' Theorem.}
        \label{fig_unrealizable}
    \end{figure}
    
    The points $2, \hdots, 9$ form a Pappus configuration minus the line $\{3, 4, 5\}$, and the lines $\{1, 2, 3 \}, \{1, 4, 5\}, \{1,6,7\}$ were chosen to ensure that each point lies on at least 3 lines; see Figure \ref{fig_unrealizable}. Any weak realization of $S$ satisfies Pappus' Theorem, so $\det[345]$ vanishes. It follows that $S$ has no strong realizations over any field.
\end{example}

\section{Questions for future research} \label{sec_q}

Since a linear space is a combinatorial blueprint for a variety, it is natural to ask about connections between combinatorial properties of linear spaces and the geometry of the corresponding incidence varieties. For instance, automorphisms of linear spaces induce automorphisms of their incidence varieties. But given the diversity of incidence varieties, it seems difficult to formulate any general laws beyond this. The arithmetic of realization spaces can also be arbitrarily complicated. Using enough points, one may construct an incidence variety that tests for roots in the base field $k$ of any given polynomial \cite[Section 2.1]{MR1009366}.  One can also detect characteristic; for example, $\mc{R}(S_{\Fano})$ is empty unless the characteristic of the base field is $2$.

Despite the universal nature of incidence varieties, there are many interesting questions to pursue regarding their structure, several of which we collect below.

\begin{question}[more points]
    
In principle, it should be possible to study incidence varieties constructed with $n$ points for larger values of $n$ by reducing to incidence varieties of superfigurations. But there is a combinatorial explosion, as shown in Table \ref{tab:linearspaces}.
\begin{center}
\small
\begin{table}[!ht]
\begin{tabular}{|c|r|r|}
\hline
    $n$ & \# linear spaces & \# $n_3$-superfigurations \\ \hline
    1 & 1 & 0 \\
    2 & 1 & 0 \\
    3 & 2 & 0 \\
    4 & 3 & 0 \\
    5 & 5 & 0 \\
    6 & 10 & 0 \\
    7 & 24 & 1 \\
    8 & 69 & 1 \\
    9 & 384 & 10 \\
    10 & 5250 & 151 \\
    11 & 232929 & 16234 \\
    12 & 28872973 & $>$179000 \\
    \hline
\end{tabular}
\label{tab:linearspaces}
\caption{The number of linear spaces and $n_3$-superfigurations on $n$ points, for $1 \leq n \leq 12$ \cite{MR1670277, MR3721583}.}
\end{table}
\end{center}

It might be possible to analyze all $16234$ realization spaces of $11_3$-superfigurations by computer. We expect that the majority of these spaces are rational. 

Rather than continuing in pursuit of complete classification, we think it would be most interesting to know how to realize specific kinds of components (e.g. varieties of general type, non-reduced schemes, singular varieties) using a minimal number of points. The only result we know of in this direction is Sturmfels' version of the universality theorem, which shows that the number of points required to realize a $\Q$-variety $V$ as an incidence variety component (up to birational transformations) is a polynomial in the degrees and coefficients of the equations of $V$ \cite{MR1002208}. 

Alternatively, one could restrict to interesting subclasses of linear spaces that might have realization spaces with additional structure. For example, a heuristic argument suggests that $n_3$-configurations should have Calabi-Yau realization spaces \cite{sink}. What about linear spaces with extremal numbers of lines, self-dual linear spaces, and linear spaces with large automorphism groups?
\end{question}

\begin{question}[unexpected dimensions]
    While the $10_3$-configurations all have the same number of points and lines, their realization spaces may be empty or nonempty, and the nonempty realization spaces may have dimension $2$ or $3$, indicating that there is no easy way to predict even the dimension of a realization space. This is closely related to the geometric configuration theorems of Pappus and Desargues that force the existence of unexpected lines. What is the maximum possible dimension of a realization space in terms of $n$? At the other extreme, what are the asymptotics of the proportion of unrealizable linear spaces as $n \to \infty$?
\end{question}

\begin{question}[realizability over $\R$]
Realizability over $\R$ is a central topic in the study of $n_3$-configurations \cite{MR2510707}. When a configuration is realizable over $\R$, it can be drawn with pen and paper using true lines, as opposed to the bent lines that appear in Figure \ref{fig_fano}. Using the methods of this paper, it should be possible to solve the following problems.
\begin{enumerate}
    \item Which linear spaces on at most $10$ points have realizations over $\R$?
    \item For which linear spaces are the realizations over $\Q$ dense in the realizations over $\R$? Sink addressed this problem for the $10_3$-configurations \cite{sink}.
\end{enumerate}
\end{question}

\begin{question}[finite characteristic]
Some superfigurations are unrealizable over $\Q$ but are nevertheless realizable in some finite characteristics. This behavior can be detected by working with incidence schemes over $\Spec \Z$. Given a prime $p$, how many points are needed to define an incidence scheme that is empty over $\Q$ but has points over $\F_p$?
\end{question}

\begin{question}[higher dimension]
The incidence varieties in $\PP^3$ determined by up to $7$ points with prescribed lines and planes were classified in \cite{MR4379254}. The methods therein can be modified to show that all such incidence varieties are rational. What is the set of incidence varieties determined by up to $8$ points in $\PP^3$?
\end{question}

\begin{question}[enumerative problems]
Our original motivation for classifying incidence varieties was towards counting $n$-arcs over finite fields, i.e., $n$-tuples of points in $\PP^2(\F_q)$, no $3$ of which are collinear. The counting functions for $n$-arcs for all $n \leq 9$ are quasipolynomial in $q$ \cite{MR1670277, MR957209, MR1391025}. The results of this paper lead us to expect that the counting function for $10$-arcs is non-quasipolynomial, because this counting function should involve the number of $\F_q$-points on certain K3 surfaces and elliptic curves. 
 However, the results of this paper do not quite give enough information to describe the $10$-arc counting function, since we only classify incidence varieties up to birational equivalence.
\end{question}

\section{Appendix}

In this appendix, we give a careful proof of Proposition \ref{prop_ls_reduction}, which we used to reduce our study from general incidence varieties to incidence varieties of linear spaces. We show that every component of an incidence variety $\mc{I}(n, L)$ is isomorphic to a component of an incidence variety over $k$ of a linear space on at most $n$ points.

It is helpful to enlarge the set of objects under consideration by viewing linear spaces as special cases of a certain kind of function, following \cite{MR1391025} (see also \cite{MR3721583}).

\begin{defn} \label{def_bool}
Given $n \in \N$, let $[n] \colonequals \{1,2,\ldots, n\}$.
A \textit{boolean n-function} is a function taking subsets of $[n]$ to $\{0,1\}$. Boolean $n$-functions $f$ and $g$ are \textit{isomorphic} if there is a permutation $i$ of $[n]$ such that $g = f \circ i$.
\end{defn}

In our applications, we will be interested in sets $L$ of subsets of $[n]$. The indicator function $1_L$ is a boolean $n$-function. In the other direction, to any boolean $n$-function $f$, we may associate $f^{-1}(1)$, and these operations are inverse. We therefore may identify boolean $n$-functions with sets of subsets of $[n]$.

\begin{defn}
Given boolean $n$-functions $f$ and $g$, we say $f \geq g$ if $f(I) \geq g(I)$ for all $I \subseteq \{1,2,\hdots,n\}$.
\end{defn}

The relation $\geq$ defines a partial order on the set of boolean $n$-functions, for each $n \in \N$. Our convention is to only use $\geq$ to compare between functions with the same value of $n$.

\begin{defn}
Every linear space on $[n]$ has an associated boolean $n$-function, namely, the indicator function for collinear subsets. A \emph{linear space function} is any boolean $n$-function that arises in this way. We sometimes highlight the value $n$ by saying $f$ is \textit{on $n$ points.}

The set of linear space functions on $n$ points is denoted $\LSF_n$. It is an embedded copy of the poset $\LS_n$ in the poset of boolean $n$-functions. 
\end{defn}

\begin{lemma}
Each linear space function is the associated function of a unique linear space. Further,
a boolean $n$-function $f$ is in $\LSF_n$ if and only if it satisfies the following:
\begin{enumerate}[ label={(\arabic*)} ]
\item If $f(I) = 1,$ then for all $J \subseteq I$, we have $f(J) = 1$.
\item If $\#I \leq 2,$ then $f(I) = 1$.
\item If $f(I) = f(J) = 1$ and $\# (I \cap J) \geq 2$, then $f(I \cup J) = 1$.
\end{enumerate}
\end{lemma}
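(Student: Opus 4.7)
The plan is to prove the two claims in the lemma separately: first, the iff characterization of linear space functions by the three axioms, and second, that each linear space function arises from a unique linear space.

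For the easy direction of the characterization, suppose $f$ is the associated function of a linear space $(P,L)$ with $P=[n]$. Condition (1) is immediate, since any subset of a collinear set is collinear. Condition (2) holds because subsets of size at most one are trivially collinear, and pairs are collinear by the linear space axiom that any two distinct points lie on a line. For condition (3), if $f(I) = f(J) = 1$ and $\#(I \cap J) \geq 2$, then both $I$ and $J$ contain at least two points and hence are non-trivially collinear; so $I \subseteq \ell_1$ and $J \subseteq \ell_2$ for some $\ell_1, \ell_2 \in L$, and applying the unique-line axiom to any two points of $I \cap J$ forces $\ell_1 = \ell_2$, giving $I \cup J \subseteq \ell_1$ and therefore $f(I \cup J) = 1$.

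For the harder direction, given $f$ satisfying (1)--(3), I would construct a candidate line set $L_f$ consisting of all subsets $\ell \subseteq [n]$ that are maximal (under inclusion) among subsets with $\#\ell \geq 2$ and $f(\ell) = 1$, and show that $([n], L_f)$ is a linear space with associated function $f$. Each element of $L_f$ has at least $2$ points by construction. The only nontrivial axiom to check is that two distinct points $p, q$ belong to exactly one line. Existence follows from (2) together with finiteness: $\{p,q\}$ satisfies $f(\{p,q\})=1$ and so extends to some maximal such set. Uniqueness is exactly where (3) enters: if $\{p,q\} \subseteq \ell_1 \cap \ell_2$ for distinct $\ell_1, \ell_2 \in L_f$, then $\#(\ell_1 \cap \ell_2) \geq 2$, so (3) yields $f(\ell_1 \cup \ell_2) = 1$, and the maximality defining $L_f$ forces $\ell_1 = \ell_1 \cup \ell_2 = \ell_2$, a contradiction. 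A routine check using (1) and (2) then shows that the function associated to $([n], L_f)$ agrees with $f$ on all subsets.

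For the uniqueness part of the lemma, I would show that if $(P,L)$ has associated function $f$, then $L = L_f$. Given $\ell \in L$, I claim $\ell$ is maximal among subsets with $f$-value $1$: if $\ell \subsetneq \ell'$ with $f(\ell') = 1$, then $\ell'$ is collinear, so $\ell' \subseteq m$ for some $m \in L$; since $\ell$ and $m$ share at least the two points of $\ell$, the unique-line axiom forces $\ell = m \supseteq \ell'$, a contradiction. Conversely, any maximal $\ell^* \in L_f$ is collinear and has size at least $2$, hence sits inside some $m \in L$, and by maximality $\ell^* = m \in L$. The main obstacle throughout is recognizing that condition (3) is precisely the combinatorial shadow of the unique-line axiom; once that link is isolated, the rest of the proof reduces to carefully unpacking definitions and using the finiteness of $[n]$ to extend sets to maximal ones.
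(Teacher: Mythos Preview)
Your proof is correct and follows essentially the same approach as the paper: both recover the linear space from $f$ by taking lines to be the maximal elements of $f^{-1}(1)$ of cardinality at least $2$, and both observe that this construction is inverse to forming the associated function. The paper's proof is a two-sentence sketch that declares the verification ``easily seen'' and says (1)--(3) ``follow immediately from the definitions,'' whereas you actually carry out the checks---in particular, the use of axiom (3) to establish uniqueness of the line through two points, and the argument that any line set $L$ with associated function $f$ must coincide with $L_f$.
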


\begin{proof}
Every linear space function defines a linear space on $[n]$ by defining the lines to be the maximal elements of $f^{-1}(1)$ of cardinality greater than $1$, with respect to inclusion. This operation is easily seen to be inverse to the construction of the associated linear space function. The remaining claims (1)-(3) follow immediately from the definitions.
\end{proof}

Henceforth, we identify linear space functions with the linear spaces they define. Our next step is to define a linear space function that is, in some sense, the best possible approximation to a given boolean $n$-function. 
This should be the minimal linear space function that contains at least the collinear subsets prescribed by $f$.

\begin{defn} \label{def_ls_ification}
Given a boolean $n$-function $f$, let $\bar{f}$ be the minimal linear space function $f'$ with respect to $\leq$ such that $f \leq f'$. (The existence and uniqueness of $\bar{f}$ are justified in Lemma \ref{lem_ls_ification} below.) 

Let $L$ be a set of subsets of $[n]$. We denote by $\bar{L}$ the line set of the linear space defined by $\bar{f}$, where $f$ is the boolean $n$-function $1_L$.
\end{defn}

\begin{lemma} \label{lem_ls_ification}
In Definition \ref{def_ls_ification}, the linear space function $\bar{f}$ exists and is unique.
\end{lemma}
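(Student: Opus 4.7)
The plan is to exhibit $\bar f$ as the meet of all linear space functions above $f$, deducing existence and uniqueness simultaneously from the fact that $\LSF_n$ is closed under pointwise minimum within the lattice of boolean $n$-functions. This is a standard ``closure operator'' argument: the family of linear space functions dominating $f$ is a nonempty finite meet-subsemilattice, so it has a unique minimum element.

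First I would observe that the constant boolean $n$-function $\mathbf{1}$ with $\mathbf{1}(I)=1$ for every $I\subseteq[n]$ is a linear space function (it trivially satisfies axioms (1)--(3) and corresponds to the linear space $S_{n,\{[n]\}}$). Hence the set
\[
\mc{S} = \{\, g \in \LSF_n : g \geq f \,\}
\]
is nonempty. Since $\mc{S}$ is a finite subset of the finite poset of boolean $n$-functions, to pin down a unique minimum it suffices to show that $\mc{S}$ is closed under pointwise minimum.

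Next I would verify the key closure claim: if $g,h\in\LSF_n$ and we define $(g\wedge h)(I) \colonequals \min(g(I),h(I))$, then $g\wedge h\in\LSF_n$. Axiom (1): if $(g\wedge h)(I)=1$ then $g(I)=h(I)=1$, so for any $J\subseteq I$ both $g(J)=1$ and $h(J)=1$, giving $(g\wedge h)(J)=1$. Axiom (2): if $\#I\le2$ then $g(I)=h(I)=1$, so $(g\wedge h)(I)=1$. Axiom (3): if $(g\wedge h)(I)=(g\wedge h)(J)=1$ and $\#(I\cap J)\ge 2$, then $g(I\cup J)=h(I\cup J)=1$, so $(g\wedge h)(I\cup J)=1$. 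Moreover, if $f\leq g$ and $f\leq h$ then clearly $f\leq g\wedge h$, so $g\wedge h\in\mc{S}$.

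Finally, I would define
\[
\bar f \;\colonequals\; \bigwedge_{g\in\mc{S}} g,
\]
where the meet is taken pointwise. By induction on the (finite) size of $\mc{S}$ using the previous paragraph, $\bar f\in\LSF_n$ and $\bar f\geq f$, so $\bar f\in\mc{S}$. By construction $\bar f\leq g$ for every $g\in\mc{S}$, so $\bar f$ is the (unique) minimum of $\mc{S}$, proving both existence and uniqueness. The only non-obvious step is the meet-closure of $\LSF_n$, but that reduces to routine verification of the three axioms, so there is no real obstacle.
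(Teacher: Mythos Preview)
Your proof is correct. Both you and the paper define the same candidate $\bar f = \bigwedge\{h\in\LSF_n:h\ge f\}$, but you justify that this meet lies in $\LSF_n$ differently. You check directly that $\LSF_n$ is closed under pointwise minimum by verifying axioms (1)--(3) for $g\wedge h$; this is clean and self-contained. The paper instead builds $\bar f$ from below: it introduces three monotone ``partial closure'' operators $f\mapsto f^{(1)},f^{(2)},f^{(3)}$ (one per axiom), iterates them until they stabilize at some $g\in\LSF_n$, and then uses monotonicity to conclude $g\le h$ for every $h\in\mc{S}$, whence $g=\bar f$. Your argument is shorter for this lemma alone; the paper's detour pays off later, since those same closure operators are reused in the proof of Proposition~\ref{prop_closure} to track how collinearity conditions propagate under injective maps into an arbitrary linear space.
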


\begin{proof}

Let $f$ be a boolean $n$-function. The constant function $1$ satisfies $f \leq 1$, so the set $\{h \in \LSF_n : h \geq f\}$ is non-empty. We claim that
$$f' = \bigwedge \{h \in \LSF_n : h \geq f\}$$
is a linear space function, where $\wedge$ denotes pointwise $\min$. This easily implies that $\ovl{f} = f'$.

We introduce three ``partial closure'' operations. Given a boolean $n$-function $f$:
\begin{enumerate}
    \item Let $f^{(1)}$ be the boolean $n$-function 
    \[
    f^{(1)}(I) = 
    \begin{cases}
        1 & \text{if $f(I) = 1$ or $\#I \leq 2$},\\
        0 & \text{otherwise}.
    \end{cases}
    \]
    \item Let $f^{(2)}$ be the boolean $n$-function 
    \[
    f^{(2)}(I) = 
    \begin{cases}
        1 & \text{if $f(I') = 1$ for some $I' \supseteq I$,}\\
        0 & \text{otherwise}.
    \end{cases}
    \]
    \item Let $f^{(3)}$ be the boolean $n$-function
    \[
    f^{(3)}(I) = 
    \begin{cases}
        1 & \text{if $f(I) = 1$, or } \\ 
        & \text{\quad ($I = I' \cup I''$ for some $I', I'' \in f^{-1}(1)$ such that $\#(I' \cap I'') \geq 2$),}\\
        0 & \text{otherwise}.
    \end{cases}
    \]
\end{enumerate}
Notice that $f \leq f^{(1)}, f \leq f^{(2)}, f \leq f^{(3)}$. Notice further that $f \in \LSF_n$ if and only if all four of these functions are equal. Since the set of boolean $n$-functions is finite and its maximal element is a linear space function, it follows that some linear space $g$ may be obtained from $f$ by repeatedly applying these three operations in some order. Notice also that these three operations $(1), (2), (3)$ respect $\leq$. So if $h \in \LSF_n$ and $h \geq f$, then $h \geq g$. So $g = f'$.
\end{proof}

Now we are almost prepared to prove the main result of this appendix, which concerns components of incidence varieties. While one might guess that $\mc{I}(n, L) = \mc{I}(n, \bar{L})$, this is not true in general, as illustrated by Example \ref{ex_ls_reduction}. However, the two incidence varieties become equal after restricting attention to \emph{injective} set maps. To see this, we make use of the following universal property.

\begin{prop} \label{prop_closure}
    Let $L$ be a set of subsets of $[n]$. Then $(n, \ovl{L})$ is the unique linear space $S$ with the following property. For all linear spaces $T = (P', L')$ and injective set maps $\rho: [n] \hookrightarrow P'$, the map $\rho$ is a map of linear spaces $S \hookrightarrow T$ if and only if, for every $\ell \in L$, $\{\rho(i) : i \in \ell\}$ is collinear.
\end{prop}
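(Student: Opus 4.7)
The plan is to verify both directions of the stated universal property for $S = (n, \ovl{L})$, then exploit the property itself to force uniqueness.

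\medskip

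\noindent\textbf{Existence.} The forward direction is immediate: by Definition \ref{def_ls_ification}, $1_L \leq \bar{f}$, so every $\ell \in L$ is a collinear subset of $(n, \ovl{L})$ and is therefore sent to a collinear subset of $T$ by any linear space map $\rho : (n, \ovl{L}) \to T$. For the reverse direction, suppose $\rho : [n] \hookrightarrow P'$ is injective and $\rho(\ell)$ is collinear in $T$ for every $\ell \in L$. I would introduce the auxiliary boolean $n$-function $g(I) = 1$ iff $\rho(I)$ is collinear in $T$, and verify $g \in \LSF_n$ using the three characterizing axioms of linear space functions: axiom (1) is automatic because subsets of collinear subsets of $T$ are collinear; axiom (2) uses injectivity of $\rho$ together with the fact that $\leq 2$-point subsets of $T$ are automatically collinear; axiom (3) requires that if $\rho(I), \rho(J)$ are collinear and $\#(I \cap J) \geq 2$, then $\rho(I \cup J) = \rho(I) \cup \rho(J)$ is collinear, which follows because injectivity of $\rho$ gives $\#\bigl(\rho(I) \cap \rho(J)\bigr) \geq \#\rho(I \cap J) \geq 2$, forcing the ambient lines of $T$ containing $\rho(I)$ and $\rho(J)$ to coincide. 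Since $g \geq 1_L$ by hypothesis and $g \in \LSF_n$, the minimality of $\bar{f}$ from Definition \ref{def_ls_ification} yields $g \geq \bar{f}$, which says precisely that $\rho$ carries each collinear subset of $(n, \ovl{L})$ to a collinear subset of $T$.

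\medskip

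\noindent\textbf{Uniqueness.} Suppose $S' = (n, L')$ is a second linear space with the stated property. Apply it with $T = S'$ and $\rho = \mathrm{id}$: the identity is trivially a linear space map, so the iff condition forces every $\ell \in L$ to be collinear in $S'$. Then the already-proved property of $(n, \ovl{L})$, applied with $T = S'$ and $\rho = \mathrm{id}$, shows every collinear subset of $(n, \ovl{L})$ is collinear in $S'$. Symmetrically, applying the property of $S'$ with $T = (n, \ovl{L})$ and $\rho = \mathrm{id}$ (using that each $\ell \in L$ is collinear in $(n, \ovl{L})$) shows every collinear subset of $S'$ is collinear in $(n, \ovl{L})$. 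Hence the two linear spaces have the same collinear subsets, so $L' = \ovl{L}$.

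\medskip

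\noindent The main obstacle is the verification of axiom (3) for $g$: without injectivity of $\rho$, the images $\rho(I)$ and $\rho(J)$ could share only one common point of $T$ even when $\#(I \cap J) \geq 2$, and then one cannot force the ambient lines to coincide. This is precisely why the statement of the proposition is restricted to injective set maps.
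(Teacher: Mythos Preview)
Your proof is correct, and for the backward direction of existence it is actually cleaner than the paper's argument. The paper verifies case-by-case that each of the three partial closure operations $f^{(1)}, f^{(2)}, f^{(3)}$ from the proof of Lemma~\ref{lem_ls_ification} preserves the predicate ``$\rho$ sends every set in $f^{-1}(1)$ to a collinear set of $T$,'' and then appeals to the iterative construction of $\bar f$. You bypass this entirely by observing that the pullback $g(I) = 1 \Leftrightarrow \rho(I)$ is collinear in $T$ is itself a linear space function (i.e., you check the three axioms of Lemma~5.2 directly for $g$), and then invoke the \emph{minimality} of $\bar f$ rather than its constructive description. This is more conceptual: it amounts to the observation that linear space functions pull back along injections, which is exactly what the proposition is asserting. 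The paper's approach has the mild advantage that it reuses the machinery already built for Lemma~\ref{lem_ls_ification}, but yours is shorter and makes the role of injectivity (needed only for axiom~(3), as you note) more transparent. Your uniqueness argument is the same as the paper's, with the extra step of explaining \emph{why} every $\ell \in L$ is collinear in $S'$ spelled out; the paper leaves this implicit.

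One cosmetic point: in your check of axiom~(2) for $g$, injectivity of $\rho$ is not actually needed, since $\#\rho(I) \leq \#I \leq 2$ already forces $\rho(I)$ to be (trivially) collinear.
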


\begin{proof}
    Assuming the existence of a linear space satisfying the condition of the proposition, we prove that it is unique. If $S$ and $S'$ are linear spaces with the desired property, then the identity map $\rho: [n] \to [n]$ is a map of linear spaces $S \to S'$ and $S' \to S$, so $S \leq S'$ and $S' \leq S$, which yields $S = S'$.

    We now prove the existence of a linear space satisfying the condition of the proposition. Let $f=1_L$, so $L = f^{-1}(1)$. Then $\bar{f}$ defines a linear space $S = (n,\bar{L})$, and we claim $S$ has the desired property.  Suppose that $T = (P', L')$ is a linear space and $\rho: [n] \hookrightarrow P'$ is injective.

    $\Rightarrow:$ If $\rho$ is a linear space map $S \to T$, then it sends collinear subsets in $S$ to collinear subsets in $T$. Each element of $\bar{f}^{-1}(1)$ is collinear in $S$. Then notice that $\bar{f}^{-1}(1) \supseteq f^{-1}(1)$.

    $\Leftarrow:$ For every $\ell \in f^{-1}(1)$, we have that $\{\rho(i) : i \in \ell\}$ is collinear, by assumption.
    In the notation of the proof of Lemma~\ref{lem_ls_ification},
    we claim that for every $\ell$ sent to $1$ by $f^{(1)}$, $f^{(2)}$, or ${f^{(3)}}$, we have that $\{\rho(i)\colon i \in \ell\}$ is collinear. This suffices, since repeatedly applying the partial closure operations yields that every $\ell \in \bar{f}^{-1}(1)$ is sent by $\rho$ to a collinear set. 
    \begin{itemize}[leftmargin=*]
    \item To see (1), suppose that $f^{(1)}(I) = 1$. Then $f(I)=1$ or $\#I = 1$. In the first case, by assumption $\{\rho(i) : i \in \ell\}$ is collinear. In the second case $\{\rho(i) : i \in \ell\}$ is trivially collinear.
    \item To see $(2)$, suppose that $f^{(2)}(I) = 1$. Then $f(I)=1$, in which case $\{\rho(i) : i \in I\}$ is collinear by assumption, or $f(I') = 1$ for some $I' \supseteq I$. In the latter case $\{\rho(i)\colon i \in I'\}$ is collinear by assumption, and since $T$ is a linear space, the subset $\{\rho(i)\colon i \in I\}$ is also collinear.
    \item To see $(3)$, suppose that $f^{(2)}(I) = 1$. Then $f(I)=1$, in which case $\{\rho(i) : i \in I\}$ is collinear by assumption, or $I = I' \cup I''$ where $f(I')=1$ and $f(I'')=1$ and $I' \cap I''$ has cardinality at least $2$. By the injectivity of $\rho$, the image $\rho(I' \cap I'')$ is not trivially collinear, so it is contained in a line $\ell \in L'$. By the definition of a linear space, no other line of $L'$ contains $I' \cap I''$. Since $\{\rho(i)\colon i \in I'\}$ and $\{\rho(i)\colon i \in I''\}$ are collinear and each intersect $\ell$ in at least $2$ points, their union $\{\rho(i)\colon i \in I\}$ is contained in $\ell$.
    \end{itemize}
\end{proof}

In the proof of Proposition 2.4, we consider parametrizations of injective maps into $\PP^2$ and non-injective maps to $\PP^2$ separately.  To handle the non-injective maps, we need the following definitions. Suppose $f$ is a boolean $n$-function and $\rho:[n] \twoheadrightarrow [m]$. The \emph{quotient function} $g$ is the boolean $m$-function defined by $g^{-1}(1) = \{ \rho(\ell) : \ell \in f^{-1}(1) \}$. Notice that the quotient of a linear space function need not define a linear space. Suppose $S = (P, L)$, and let $\rho: P \twoheadrightarrow Q$. The \emph{quotient space} is the linear space on $Q$ defined by $\bar{f}$, where after relabeling, $f^{-1}(1) = \{\rho(\ell) : \ell \in L\}$.

\begin{prop}[= Proposition \ref{prop_ls_reduction}]
    Let $L$ be a set of subsets of $[n]$. Every component of the incidence variety $\mc{I}(n, L)$ is isomorphic to a component of an incidence variety over $k$ of a linear space on $\leq n$ points.
\end{prop}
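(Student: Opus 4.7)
The plan is to induct on $n$, with the trivial base $n \leq 1$ (in which case $\mc{I}(n,L)$ is a point or $\PP^2$, and the corresponding boolean $n$-function already defines a linear space). For the inductive step, fix a component $\mc{C}$ of $\mc{I}(n,L)$ and split on whether $\mc{C}$ is contained in the big diagonal $\Delta = \bigcup_{i \neq j} \Delta_{ij}$.

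Suppose first that $\mc{C} \not\subseteq \Delta$, so that the open locus $U = (\PP^2)^n \smallsetminus \Delta$ of injective $n$-tuples meets $\mc{C}$ in a dense open subset. Applying Proposition \ref{prop_closure} with target $T = \PP^2_k$, an injective set map $\rho \colon [n] \hookrightarrow \PP^2$ sends every $\ell \in L$ to a collinear set if and only if it is a linear space map $(n,\bar{L}) \to \PP^2$. Hence $\mc{I}(n,L) \cap U = \mc{I}(n,\bar{L}) \cap U$. Taking Zariski closure in $(\PP^2)^n$, one has $\mc{C} = \overline{\mc{C} \cap U} \subseteq \mc{I}(n,\bar{L}) \subseteq \mc{I}(n,L)$, where the last containment uses $L \subseteq \bar{L}$. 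Since $\mc{C}$ is a component of the outer variety, it is a maximal irreducible closed subset of the inner one as well. Because $(n,\bar{L})$ is a linear space on $n$ points, this case is complete.

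Now suppose $\mc{C} \subseteq \Delta_{ij}$ for some $i \neq j$. The projection $(\PP^2)^n \to (\PP^2)^{n-1}$ forgetting the $j$th coordinate restricts to an isomorphism $\Delta_{ij} \xrightarrow{\sim} (\PP^2)^{n-1}$. Under this isomorphism, the determinantal equations defining $\mc{I}(n,L)$, after substituting $p_j = p_i$, become exactly the defining equations of $\mc{I}(n-1,L')$, where $L'$ is the image of $L$ under the quotient map $[n] \twoheadrightarrow [n-1]$ identifying $i$ with $j$. Thus $\mc{I}(n,L) \cap \Delta_{ij} \cong \mc{I}(n-1,L')$. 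Since any irreducible closed $\mc{C}' \subseteq \mc{I}(n,L) \cap \Delta_{ij}$ containing $\mc{C}$ is automatically irreducible closed in $\mc{I}(n,L)$, maximality of $\mc{C}$ there forces $\mc{C}' = \mc{C}$; hence $\mc{C}$ is a component of the intersection, and correspondingly a component of $\mc{I}(n-1,L')$. The inductive hypothesis then identifies $\mc{C}$ with a component of an incidence variety of a linear space on at most $n-1$ points.

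The main technical point is the first case: one must recognize that Proposition \ref{prop_closure}, a purely combinatorial universal property about injective maps, gives exactly the right scheme-theoretic identification $\mc{I}(n,L) \cap U = \mc{I}(n,\bar{L}) \cap U$ on the open stratum $U$, and that passing from this set-level equality to the conclusion about components of the Zariski closures is legitimate because any component not contained in $\Delta$ is the closure of its intersection with $U$. Once these two observations are made, the remainder is a bookkeeping exercise with irreducible closed subsets and a routine induction on $n$.
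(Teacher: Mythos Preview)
Your proof is correct and follows essentially the same approach as the paper: induction on $n$, Proposition~\ref{prop_closure} to identify $\mc{I}(n,L)$ and $\mc{I}(n,\bar{L})$ off the big diagonal, and the quotient construction on $\Delta_{ij}$ for the inductive step. One small slip: the justification ``$L \subseteq \bar{L}$'' is not literally true (e.g.\ $L=\{\{1,2,3\},\{2,3,4\}\}$ gives $\bar{L}=\{\{1,2,3,4\}\}$); the correct reason for $\mc{I}(n,\bar{L}) \subseteq \mc{I}(n,L)$ is that every element of $L$ is contained in a line of $\bar{L}$, so the generating determinants of $\mc{I}(n,L)$ already lie in the ideal of $\mc{I}(n,\bar{L})$.
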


\begin{proof}
We argue by induction on $n$.  The case $n=1$ is clear because the choices for $L$ are $\{\}, \{ \varnothing \}, \{\{1\}\}, \{ \varnothing,\{1\}\}$, these all give rise to the same incidence variety $\PP^2$, and the first choice $(1, \varnothing)$ is a linear space. 

Suppose the claim holds for $n-1$. The incidence variety $\mc{I}(n,L)$ parametrizes maps $[n] \to \PP^2$ such that the image of every set in $L$ is collinear. Taking $T = \PP^2$ in Proposition \ref{prop_closure}, an injective set map $\rho : [n] \hookrightarrow \PP^2$ is a weak realization of $(n,\bar{L})$ and only if the point in $(\PP^2)^n$ representing $\rho$ lies in $\mc{I}(n,L)$. Recall that we defined $\Delta = \bigcup_{i \neq j} \Delta_{ij}$, where $\Delta_{ij} = \{p_i = p_j\}$. Since injective set maps are precisely those represented in the complement of $\Delta$,
we have
$$\mc{I}(n,L) \smallsetminus \Delta = \mc{I}(n, \bar{L}) \smallsetminus \Delta.$$
Because incidence varieties are closed, any component of $\mc{I}(n,L)$ not contained in $\Delta$ agrees with a component of $\mc{I}(n, \bar{L})$.

Now, to prove the proposition, we only must show that components of $\mc{I}(n,L)$ contained in $\Delta$ are each isomorphic to components of incidence varieties of linear spaces on at most $n$ points. Maps $[n] \to \PP^2$ with image in $\Delta_{ij}$ are in one-to-one correspondence with maps $Q \to \PP^2$, where $Q$ is the quotient set $[n]/\sim$ obtained by identifying $i,j$. So $\mc{I}(n,L) \cap \Delta_{ij} \cong \mc{I}(n-1,L')$, where $L'$ is the quotient of $L$ after identifying $i,j$ (and labeling the identified points in any way). The inductive hypothesis then shows that all components of this form come from incidence varieties of linear spaces on $n - 1$ points. Since every component contained in $\Delta$ is contained in some $\Delta_{ij}$, we are done.
\end{proof}

\bibliographystyle{plain}
\bibliography{bib}

\end{document}